\DeclareMathOperator*{\lcm}{lcm}
\begin{document}

\theoremstyle{plain}
\newtheorem{theorem}{Theorem}[section]
\newtheorem{main}{Main Theorem}
\newtheorem{proposition}[theorem]{Proposition}
\newtheorem{corollary}[theorem]{Corollary}
\newtheorem{lemma}[theorem]{Lemma}
\newtheorem{conjecture}[theorem]{Conjecture}
\newtheorem{claim}[theorem]{Claim}
\newtheorem{fact}[theorem]{Fact}
\newtheorem{question}[theorem]{Question}
\newtheorem*{st}{Statements}	

\numberwithin{equation}{section}

\theoremstyle{definition}
\newtheorem{definition}[theorem]{Definition}
\newtheorem{notation}[theorem]{Notation}
\newtheorem{convention}[theorem]{Convention}
\newtheorem{example}[theorem]{Example}
\newtheorem{remark}[theorem]{Remark}
\newtheorem*{ac}{Acknowledgements}	

\newcommand{\Rep}{\mathrm{Rep}}
\newcommand{\Id}{\mathrm{id}}
\newcommand{\Aut}{\mathrm{Aut}}
\newcommand{\ch}{\mathrm{ch}}
\newcommand{\VVec}{\mathrm{Vec}}
\newcommand{\PSU}{\mathrm{PSU}}
\newcommand{\SO}{\mathrm{SO}}
\newcommand{\SU}{\mathrm{SU}}
\newcommand{\SL}{\mathrm{SL}}
\newcommand{\FPdim}{\mathrm{FPdim}}
\newcommand{\FPdims}{\mathrm{FPdims}}
\newcommand{\dims}{\mathrm{dims}}
\newcommand{\Tr}{\mathrm{Tr}}
\newcommand{\ord}{\mathrm{ord}}

\newcommand{\mC}{\mathcal{C}}
\newcommand{\mB}{B}
\newcommand{\hc}{\hom_{\mC}}
\newcommand{\id}{{\bf 1}} 
\newcommand{\spec}{i_0} 
\newcommand{\Sp}{i_0} 
\newcommand{\SpecS}{I_{s}} 
\newcommand{\SpS}{I_{s}'} 
\newcommand{\field}{\mathbb{K}} 
\newcommand{\white}{\textcolor{red}{\bullet}} 
\newcommand{\black}{\bullet} 
\newcommand{\proofsketch}{ \noindent \textit{Proof sketch.} }
\newenvironment{restatetheorem}[1]
  {
   \par\addvspace{0.25\baselineskip}
   \noindent\textbf{Theorem \ref{#1}.}\ \itshape
  }
  {
   \par\addvspace{0.25\baselineskip}
  }

\newcommand{\sebastien}[1]{\textcolor{blue}{#1 - Sebastien}}

\title{Classification of integral modular data up to rank 13}

\author{Max A. Alekseyev$^{a}$}
\address{M. A. Alekseyev}
\email{maxal@gwu.edu}

\author{Winfried Bruns$^{b}$}
\address{W. Bruns}
\email{wbruns@uos.de}

\author{Sebastien Palcoux$^{c}$}
\address{S. Palcoux}
\email{sebastienpalcoux@gmail.com}
\urladdr{https://sites.google.com/view/sebastienpalcoux}

\author{Fedor V. Petrov$^{d}$}
\address{F.V. Petrov}
\email{f.v.petrov@spbu.ru}
\maketitle
%
%
%
%
%
%
%
%
%
%
\begin{center}
\footnotesize
$^{a}$Department of Mathematics, George Washington University, Washington, DC 20052, USA. 

$^{b}$Institut für Mathematik, Universität Osnabrück, 49069 Osnabrück, Germany. 

$^{c}$Beijing Institute of Mathematical Sciences and Applications,
Huairou District, Beijing 101408, China. 

$^{d}$St. Petersburg State University, St. Petersburg 199034, Russia. 

\end{center}
\begin{abstract} 
This paper classifies the modular data of integral modular fusion categories up to rank 13, and integral half-Frobenius fusion rings up to rank 12. We establish that every perfect case within these bounds is trivial. Furthermore, we refine the non-pointed odd-dimensional modular data at ranks below 25 to exactly three items, all of rank 17, FPdim 225, and type [[1,3],[3,8],[5,6]], filling existing literature gaps. For rank 25, we narrow the perfect case to three open types.

Our core insight is that Egyptian fractions, typically used to list possible types, can be chosen with squared denominators. We develop several type criteria as initial filters. To construct the fusion rings, we solve dimension and associativity equations utilizing custom-built features in Normaliz. S-matrices are generated by self-transposing the character table, and T-matrices are derived by solving the Anderson-Moore-Vafa equations, concluding with the verification of extended modular data axioms.

From rank 13 onward, types are restricted by modular-specific properties involving universal grading, congruence representations of the modular group, and Galois action. This establishes critical arithmetic constraints: up to rank 21, a prime divisor of the global FPdim cannot exceed the rank, and up to rank 15 (non-pointed case), it cannot exceed half the rank. Ultimately, we reduce the rank 14 classification to 35 possible types, 8 of which are non-perfect.
\end{abstract}

\section{Introduction} 
In this paper, we assume that all fusion categories are defined over the complex field. The concept of an integral modular fusion category has been extensively studied, as detailed in the references at the beginning of \cite{CzPl}. In \cite{BrRo}, they have been classified up to rank $6$ (all pointed), with Egyptian fractions playing a crucial role. The approach that enables us to extend this classification up to rank $13$ in our work hinges on the observation that it is sufficient to consider Egyptian fractions with squared denominators. This restriction significantly reduces the combinatorial complexity. To illustrate this point, consider that the number of Egyptian fractions (summing to $1$) of length $n=1,2, \dots , 8$ is $1, 1, 3, 14, 147, 3462, 294314, 159330691$, respectively (as per \cite{A002966}). In contrast, when limited to squared denominators, the counts are $1, 0, 0, 1, 0, 1, 1, 4$, respectively (refer to \cite{A348625}).

We begin by recalling the concept of a fusion ring and its fundamental results in \S \ref{sub:Fu}, with reference to \cite[Chapter 3]{EGNO}. As defined in \cite{ENO21}, a fusion ring $\mathcal{F}$ is termed \emph{s-Frobenius} if for every basic element $b$, the ratio $\FPdim(\mathcal{F})^s / \FPdim(b)$ is an algebraic integer. According to \cite[Proposition 8.14.6]{EGNO}, the Grothendieck ring of a modular fusion category is $1/2$-Frobenius (denoted \emph{half-Frobenius} in the rest of the paper). Consider $\mathcal{F}$ to be an integral half-Frobenius fusion ring with a basis $\{b_1, \dots, b_r \}$,  $\FPdim \ D$, and type $[d_1, \dots, d_r]$, where $1=d_1 \le d_2 \le \cdots \le d_r$ and $d_i = \FPdim(b_i)$. Thus $d_i^2$ is a divisor of $D$, for all $i$. There exists a unique square-free integer $q$ such that $D = qs^2$, implying that each $d_i$ is a divisor of $s$. Let $s_i$ denote the positive integer $s/d_i$. Given that $D = \sum_{i=1}^r d_i^2$, we arrive at the following representation of $q$ as an Egyptian fraction with squared denominators $ q = \sum_{i=1}^r 1/s_i^2$. We have classified all such Egyptian fractions up to $r=13$ using SageMath, as will be discussed in \S \ref{sec:Egy}, where a method to constrain to $q \le r/4$ is also described (Proposition \ref{prop:reduction}). Since $s_1 = s$, we have $d_i = s_1 / s_i$, and we may assume that $s_i$ is a divisor of $s_1$, for all $i$. As detailed in \S \ref{sub:types}, this leads us to consider only $9025$ types up to rank $13$.

The subsequent phase entails implementing new criteria for identifying a type that emerges from a fusion ring, as delineated in \S \ref{sec:crit}.
The proof of these criteria predominantly relies on modular arithmetic and serves to rule out approximately $62\%$ of the types up to rank $13$. 

To address the remaining types, we classify all possible fusion data $(N_{i,j}^k)$, as defined in \S \ref{sub:Fu}, for each type $(d_i)$, utilizing our fusion ring solver described in \S \ref{sec:FRSolver}. We begin by reducing the number of variables, leveraging the Unit axiom of fusion data and the Frobenius reciprocity. The main challenge, denoted as \emph{patching}, involves integrating the associativity equations $\sum_s N_{i,j}^s N_{s,k}^t = \sum_s N_{j,k}^s N_{i,s}^t$ (which are non-linear) as efficiently as possible into the (linear) solving process of the dimension equations \(d_i d_j = \sum_k N_{i,j}^k d_k\), which are positive linear Diophantine equations. This approach was implemented using Normaliz \cite{Norma}, on which we developed new features dedicated to the classification of fusion rings, as explained in \S \ref{sub:user}, and for more details, see Appendix H of the manual \cite{NorManual}.

This step culminates in a classification of all the half-Frobenius integral fusion rings up to rank $12$, tallying exactly $10628$ instances derived from $71$ types, and proves the absence of any non-trivial perfect\footnote{Recall that \emph{perfect} means exactly one basic element with $\FPdim = 1$. A non-pointed simple fusion ring is perfect; \emph{pointed} means all basic elements have $\FPdim=1$, and \emph{simple} means no non-trivial proper fusion subrings.} integral half-Frobenius fusion rings up to rank $12$ (see \S \ref{sec:Half}). We can limit our attention to commutative fusion rings since a modular fusion category, being braided, possesses a commutative Grothendieck ring (although our classification encompasses $213$ noncommutative fusion rings as well; see \S \ref{sec:Half}).

From rank $13$ onward, the types were further restricted by additional properties coming from more advanced results on modular fusion categories, see \S \ref{sec:Egy} and \S \ref{sec:AdvMD}. This adjustment was necessary because we encountered computational limits for classifying all half-Frobenius fusion rings. Consequently, the result became less general at the fusion ring level compared to what we get up to rank $12$. In the non-perfect case, we applied specific \emph{universal grading} techniques, as discussed in \S \ref{sub:univ} and based on \cite[Proposition VI.2]{NRW23}. Additionally, we explored congruence representations of the modular group (see \S \ref{sub:cong}) and Galois actions (see \S \ref{sub:gal}). In particular, we provide two proofs of Theorem \ref{thm:folk}, a folklore\footnote{Throughout the paper, the term \emph{folklore} is used solely to indicate that the result is well-known to experts but not explicitly stated in the literature. It does not imply anything beyond that.} result: For any prime factor $p$ of the dimension norm of a modular fusion category with rank $r$, it holds that $p \leq 2r + 1$. The shorter proof applies \cite[Theorem II (iii)]{DLN}.
This inequality is optimal, and the examples for which the equality holds are classified in \cite{NWZ}. Regarding the integral case, discussions with Eric Rowell and Andrew Schopieray indicated that the rank $r$ can be substituted with the multiplicity $m$ of a certain basic $\FPdim$, leading to the inequality $p \leq 2m+1$, see Theorem \ref{thm:RoSc}, required to exclude some hard types at rank $13$. We have a stronger version:  
\begin{restatetheorem}{thm:StrongPrime}
For an integral modular fusion category, let $S$ be the set of odd prime factors of the global $\FPdim$. There is a partition $(S_i)$ of $S$, and multiplicities $(m_i)$ of \emph{some} distinct basic $\FPdims$ such that $$m_i \ge \frac{1}{2} \lcm_{p \in S_i}(p-1).$$  
\end{restatetheorem} 
\noindent It was crucial for proving Proposition \ref{prop:MNSDper25} for the odd-dimensional case, and also the following theorem from \S \ref{sub:ArCo}:
\begin{theorem} \label{thm:strong}
Let $\mathcal{C}$ be an integral modular fusion category of rank $r$. For any prime factor $p$ of $\FPdim(\mathcal{C})$, the following bounds hold:
\begin{enumerate}
\item If $r \leq 21$ , then $p \leq r$.
\item If $r \leq 15$ and $\mathcal{C}$ is non-pointed, then $p \leq r/2$.
\end{enumerate}
In the non-perfect case, the above upper bounds can be improved to $r \leq 24$ and $r \leq 16$, respectively.
\end{theorem}
\noindent These inequalities are conjectured to hold without rank restriction in \S \ref{sub:ArCo}. These conjectures are proved for $\mathcal{Z}(\Rep(G))$, across all finite groups $G$, in \S \ref{sub:ZG}, among other results.

At rank $13$, we obtained $28998$ fusion rings from $10$ types. The next step is to classify all possible modular data related to the fusion rings found up to rank $13$. The definition of modular data we employ (refer to \S \ref{sub:MD}) is informed by the key attributes of a modular fusion category, specifically a pseudo-unitary one, as our research is centered on the integral case (see \cite[Proposition 9.6.5]{EGNO}).

First, we examine the $S$-matrices: for a given commutative fusion ring, we take its eigentable (as defined in Definition \ref{def:eigentable}) and consider it as a matrix, retaining only those with cyclotomic elements—such fusion rings are termed \emph{cyclotomic}. If suitable renormalization and permutation yield a self-transpose matrix (detailed in \S \ref{sub:Smat}), we call the fusion ring as \emph{self-transposable}; if not, it is dismissed. From this, we infer that there are precisely $69$ self-transposable, cyclotomic, half-Frobenius, integral fusion rings up to rank $12$, originating from $27$ types, which is fewer than $0.7\%$ of the $10628$ identified in the initial stage. At rank $13$ (more restricted), we reduced to 
$10$ non-pointed fusion rings from $2$ types.

Moving on to the $T$-matrices: for the fusion rings that remain, we solve the Anderson-Moore-Vafa equations (see \S \ref{sub:MD}) in the $\mathbb{Z}$-module $\mathbb{Q}/\mathbb{Z}$.
We preserve only those $S$- and $T$-matrices that satisfy all the conditions of Definition \ref{def:MD}. The use of our so-called \emph{magic criterion} was pivotal for several big cases and could lead to an interesting theoretical reformulation, see Question \ref{Q:magic}. Ultimately, we arrive at $19+64$ modular data, derived from $5+18$ fusion rings and $3+13$ types (non-pointed + pointed).

\begin{remark} \label{rem:pointedMD}
Every pointed modular fusion category corresponds to a metric group $(G,q)$—a finite Abelian group $G$ equipped with a non-degenerate quadratic form $q: G \to \mathbb{C}^*$, represented by the $T$-matrix, as described in \cite[\S 8.4]{EGNO}.
\end{remark}

The modular data (MD) mentioned in \cite[\S\ref{sec:modata}]{ABPPsupp} encompass $S$- and $T$-matrices, central charge, fusion data, and second Frobenius-Schur indicators for the non-pointed case. For the pointed case, however, it includes only the $T$-matrices. The following theorem, proved in \S \ref{sec:R13}, provides a concise overview:

\begin{theorem} \label{thm:main}
There are $19$ MD of non-pointed integral modular fusion categories up to rank $13$, given by: 
\begin{itemize}
\item Rank $8$, $\FPdim \ 36$, type $[1, 1, 2, 2, 2, 2, 3, 3]$: 
	\begin{itemize}
	\item $6$ MD with central charge $c=0$ from $\mathcal{Z}(\VVec_{S_3}^{\omega})$, see \cite{GrMo},
	\item $2$ MD with $c=4$ from $(C_3^2 + 0)^{C_2}$, see \cite[point (b) on page 983]{GeNaNi}. 
	\end{itemize}
\item Rank $10$, $\FPdim \ 36$, type $[1, 1, 1, 2, 2, 2, 2, 2, 2, 3]$:
	\begin{itemize}
	\item  $3$ MD with $c=4$, from $SU(3)_3$, its complex conjugate and a zesting, see \cite[\S 6.3.1]{DeGaPlRoZh}.
	\end{itemize}
\item Rank $11$, $\FPdim \ 32$, type $[1,1,1,1,2,2,2,2,2,2,2]$:
	\begin{itemize}
	\item $8$ MD with $c=\pm 1$, from $SO(8)_2$, a zesting and their Galois conjugates, see \S \ref{sub:mod11}. 
	\end{itemize}
\end{itemize}
There are $64$ modular data of pointed modular fusion categories up to rank $13$. Their number per group $G$ is in Table~\ref{tab:pointedMD}.
\end{theorem}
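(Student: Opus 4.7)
The plan is to execute exhaustively the pipeline already announced in the introduction, keeping careful track at each stage of what is eliminated and what survives, and at the end to match the survivors with the constructions listed. First I would enumerate all Egyptian fractions with squared denominators of length $r \le 11$ (and $r \le 13$ for the ``no perfect'' by-product), which by \S \ref{sec:Egy} yields $216$ candidate types $[d_1,\dots,d_r]$; these encode all conceivable Frobenius--Perron dimension vectors of a $\frac{1}{2}$-Frobenius integral fusion ring, thanks to the divisibility $d_i^2 \mid D$ recalled in the introduction.

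Next I would prune the list via the elementary number-theoretic criteria of \S \ref{sec:crit} (mostly modular-arithmetic obstructions on the $N_{i,j}^k$), which eliminate about one third of the candidates, and then feed each remaining type to the fusion-ring solver of \S \ref{sec:Fu}: reduce the number of variables using the unit axiom and Frobenius reciprocity, translate the dimension equations $d_i d_j = \sum_k N_{i,j}^k d_k$ into positive linear Diophantine systems, solve them with Normaliz, and discard any solution violating an associativity relation. This produces the complete list of $1527$ $\frac{1}{2}$-Frobenius integral fusion rings up to rank $11$ (from $43$ types), of which $1472$ are commutative. For each commutative ring I would then compute the eigentable, test whether its entries lie in a cyclotomic field, and, if so, attempt a renormalization and row/column permutation making the matrix symmetric (the self-transposability test of \S \ref{sub:FuToMD}); exactly $44$ rings (from $21$ types) survive and furnish candidate $S$-matrices.

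For each such $S$-matrix I would set up the Anderson--Moore--Vafa system on $\mathbb{Z}/m\mathbb{Z}$, with $m$ the largest integer dividing $\FPdim(\mathcal{R})^{5/2}$, enumerate all its solutions for $T$, and retain only those pairs $(S,T)$ satisfying every clause of Definition \ref{def:MD}. Counting the survivors gives exactly $19$ non-pointed modular data distributed over the three types listed in the statement, and $44$ pointed ones; the pointed tally is obtained by grouping according to Remark \ref{rem:pointedMD}, i.e.\ by counting non-degenerate quadratic forms on each abelian group of order at most $11$. To close the categorifiability clause it then suffices to match each surviving non-pointed datum with an already known realization: the $8$ modular data of rank $8$ with $\FPdim\ 36$ come from the Drinfeld centers of \cite{GrMo} and from the equivariantization $(C_3^2+0)^{C_2}$ of \cite{GeNaNi}; the $3$ in rank $10$ from $\SU(3)_3$, its complex conjugate and a zesting as in \cite{DeGaPlRoZh}; and the $8$ in rank $11$ from $\SO(8)_2$, its conjugates and the zestings constructed in \S \ref{sub:zes}.

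The main obstacle will be the fusion-ring enumeration step: the linear Diophantine system grows rapidly with the rank and with the number of distinct multiplicities, so its practical feasibility hinges on exploiting the quadratic associativity relations inside the Normaliz loop, as outlined in \S \ref{sec:Fu}. Once the $1527$ rings are in hand, the cyclotomy check, the self-transposability test, and the Anderson--Moore--Vafa computation modulo $m$ are of a fundamentally finite nature and do not pose a serious obstacle beyond careful bookkeeping.
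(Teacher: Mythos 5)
Your proposal follows essentially the same route as the paper: the Egyptian-fraction enumeration of the $216$ candidate types, the type criteria of \S\ref{sec:crit}, the Normaliz-based solver of \S\ref{sec:Fu} yielding the $1527$ fusion rings, the cyclotomic/self-transposability filter leaving $44$ rings, the Anderson--Moore--Vafa computation modulo $m$, the final check against Definition \ref{def:MD}, and the matching with known categorical models; all of your intermediate counts agree with those reported in \S\ref{sub:FuToMD}, \S\ref{sec:Half} and \S\ref{sec:modata}. This is exactly the argument the paper gives (see the concluding sentence of \S\ref{subsub:Tmat}), so nothing further is needed beyond actually carrying out the computations.
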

\begin{table}[h]
\[
\begin{array}{c|c|c|c|c|c|c|c|c|c|c|c|c|c|c|c|c|c|c}
G & C_1 & C_2 & C_3 & C_2^2 & C_4 & C_5 & C_6 & C_7 & C_2^3 & C_2 \times C_4 & C_8 & C_3^2 & C_9 & C_{10} & C_{11} & C_2 \times C_6 & C_{12} & C_{13}\\ \hline
\# \text{MD} & 1 & 2 & 2 & 5& 4 & 2 & 4 & 2 & 4 & 4 & 4 & 2 & 2 & 4 & 2 & 10 & 8 & 2
\end{array}
\]
\caption{Number of modular data for pointed modular fusion categories of rank at most $13$}
\label{tab:pointedMD}
\end{table}
We found no other integral modular data up to rank 13 apart from the categorifiable ones listed above.
\begin{question} \label{qu:MDcat}
Is there a modular data without categorification?
\end{question}

Note that \cite{NRW23} presents five intriguing non-integral candidates of ranks $11$ and $12$ (see \cite[I.C]{NRW23}), which are relevant to Question~\ref{qu:MDcat}; and it also agrees with\footnote{Our classification up to rank $12$ appeared in an earlier version as arXiv:2302.01613v3 in May 2023, three months before the release of \cite{NRW23} as arXiv:2308.09670 in August 2023. Our collaboration began in September 2021 at \url{https://mathoverflow.net/q/403397/34538}.} Theorem \ref{thm:main} up to rank $12$. This theorem yields the following consequence:

\begin{theorem} \label{thm:perfectcat13}
Every perfect integral modular fusion category up to rank $13$ is trivial. 
\end{theorem}  

In fact, we obtained the following more general result within the context of fusion rings up to rank $12$:

\begin{theorem} \label{thm:perfect12}
Every perfect integral half-Frobenius fusion ring up to rank $12$ is trivial.
\end{theorem}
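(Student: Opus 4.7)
The plan is to extend the computation used for Theorem \ref{thm:perfectnoprimepower12} to \emph{all} perfect types, treating the additional cases in which some non-trivial basis element has prime-power FPdim.

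First I would enumerate all perfect $\frac{1}{2}$-Frobenius types of rank $r \le 12$: using the Egyptian-fraction-with-squared-denominators framework of \S\ref{sec:Egy}, list all tuples $[1, d_2, \dots, d_r]$ with $d_i \ge 2$ for every $i \ge 2$ and $d_i^2 \mid \sum_j d_j^2$. The types where no $d_i$ (for $i \ge 2$) is a prime power are precisely the ones already handled by Theorem \ref{thm:perfectnoprimepower12}; the remaining types, those containing at least one prime-power $d_i$, constitute the new work.

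Next, for each such remaining type, apply the modular-arithmetic exclusion criteria of \S\ref{sec:crit} to discard as many as possible, and then invoke the Normaliz-based fusion ring solver of \S\ref{sec:Fu} to enumerate every $\frac{1}{2}$-Frobenius integral fusion ring of that type. Since perfection is already baked into the type (all $d_i \ge 2$ for $i \ge 2$), it suffices to verify that the solver returns no solution. For $r \le 11$ the verification is already contained in the full classification of the $1527$ $\frac{1}{2}$-Frobenius integral fusion rings recorded in \S\ref{sec:Half}, among which only the trivial rank-$1$ ring is perfect; the genuinely new computation happens at $r = 12$.

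The main obstacle is the combinatorial cost at rank $12$ on types with several repeated prime-power entries, such as $[1, p, p, \dots, p, q, \dots]$, which yield large and highly symmetric systems of associativity equations. Exploiting the symmetric group action permuting indices of equal FPdim (enumerating solutions up to this equivalence), together with the unit-axiom and Frobenius-reciprocity variable reductions from \S\ref{sub:Fu}, will be essential to keep the solver tractable on these families; without such reductions the computation would not terminate in reasonable time, which is precisely why the author notes the result is \emph{a bit harder to check} than Theorem \ref{thm:perfectnoprimepower12}.
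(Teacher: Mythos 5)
Your proposal matches the paper's proof in all essentials: enumerate the perfect types up to rank $12$ via Egyptian fractions with squared denominators, prune them with the modular-arithmetic type criteria of \S\ref{sec:crit}, and let the Normaliz-based solver of \S\ref{sec:Fu} confirm that no non-trivial perfect fusion ring survives (the rank $\le 11$ cases being already settled by the classification in \S\ref{sec:Half}). The paper's concrete implementation of your ``exploit the symmetry of equal-dimension entries'' remark is the dimension-partition solver of \S\ref{sub:DimPar}, which reduces the rank-$12$ perfect types to just $7$ before the full solver is needed, but this is the same strategy, not a different one.
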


The proof of Theorem \ref{thm:perfect12} for ranks up to $9$ is straightforward, following the list mentioned in \S \ref{sub:types} combined with an extended version of the Nichols-Richmond theorem applied to fusion rings, as detailed in the proof of \cite[Theorem 11]{NiRi}. This is due to the consistent presence of a non-trivial basic element with $\FPdim \le 2$. However, proving the theorem for ranks up to 12 necessitates the employment of type criteria, as discussed in \S \ref{sec:crit}, and the use of a fusion ring solver, elaborated in \S \ref{sec:FRSolver}.

It should be noted that the Drinfeld center of the representation category of any non-Abelian finite simple group $G$—and, more broadly, any centerless perfect group—is a perfect (though not simple) integral modular fusion category denoted as $\mathcal{Z}(\Rep(G))$ with $\FPdim = |G|^2$. For further information, see \cite[\S 11.1]{BuPa23}. Thus, the Grothendieck ring of $\mathcal{Z}(\Rep(A_5))$, of rank $22$ and type $[[1, 1], [3, 2], [4, 1], [5, 1], [12, 10], [15, 4], [20, 3]]$, constitutes a perfect integral half-Frobenius fusion ring. Consequently, Theorem \ref{thm:perfect12} cannot be extended to all ranks; however, it remains an open question whether its simple version can be:

\begin{question} \label{qu:SimpleIntHalfRing}
Is there a non-pointed simple integral half-Frobenius fusion ring?
\end{question}

A negative response to Question \ref{qu:SimpleIntHalfRing} would imply a negative answer to the renowned \cite[Question 2]{ENO11} in the simple case, due to a result in \cite{LPRinter}, which states that every simple integral fusion category is weakly group-theoretical if and only if every simple integral modular fusion category is pointed. With this in mind, we propose the following question:

\begin{question} \label{qu:SimpleIntModCat}
Is there a non-pointed simple integral modular fusion category?
\end{question}

For further insights into Question \ref{qu:SimpleIntModCat} at the fusion ring level, \cite[Corollary 6.16]{nik} adds a constraint: the absence of any prime-power basic $\FPdim$. It is worth noting that Theorem \ref{thm:perfect12} cannot be generalized to all ranks, even with this added constraint. This is because the Grothendieck ring of $\mathcal{Z}(\Rep(A_7))$, which is a perfect integral half-Frobenius fusion ring of rank $74$ and type $$ [[1, 1], [6, 1], [10, 2], [14, 2], [15, 1], [21, 1], [35, 1], [70, 9], [105, 4], [210, 20], [280, 9], [360, 14], [504, 5], [630, 4]],$$ satisfies this constraint (but consider Question \ref{Q:less74}). If necessary, Question \ref{qu:SimpleIntHalfRing} could be refined to include this constraint and the property of commutativity, and even more advanced constraints from \S \ref{sec:AdvMD}.


Using similar methods, in \S\ref{sec:r14r15} we reduce the rank-14 classification to just \(35\) possible types, \(27\) of which are perfect—each having an \(\FPdim\) whose set of prime divisors is exactly \(\{2,3,5,7\}\); see also Remark~\ref{ENOcrit}. Additionally, in conjunction with \cite[Remark 4.3]{CzPl} and \cite[Corollary 6.16]{nik}, we are able to demonstrate the following (refer to \S \ref{sec:MNSD}):
\begin{theorem} \label{thm:MNSD25}
There are three MD of non-pointed odd-dimensional modular fusion categories at rank below $25$: 
\begin{itemize}
\item Rank $17$, $\FPdim \ 225$, type $[1,1,1,3,3,3,3,3,3,3,3,5,5,5,5,5,5]$: 
	\begin{itemize}
	\item $3$ MD with central charge $c=4$ from $2$ fusion rings (see \cite[\S\ref{sec:OddMD}]{ABPPsupp} for the details).
	\end{itemize}
\end{itemize}
\end{theorem}
\begin{remark} \label{rk:IntroGaps} Theorem \ref{thm:MNSD25} does not align with \cite[Theorem 4.2, proof of Case (viii) $\FPdim(\mathcal{C}_{pt}) = p$]{BGHKNNPR} as well as \cite[Theorem 6.3 (b), proof of Case $|\mathcal{G}(\mathcal{C})|=3$]{CzPl}. But Remark \ref{rk:gaps} points out gaps in these proofs. Following our paper, \cite{CzPl} was corrected on arXiv, and \cite{GPR24} introduces modular category models for these new MD.
\end{remark}
Finally, this paper narrows down the possible rank $25$ odd-dimensional perfect types to $3$ ones, see Proposition \ref{prop:MNSDper25}.

\tableofcontents

\section{Fusion Data and Modular Data} \label{sec:FDMD}

In this section, we review the concepts of fusion data and modular data, along with the essential results. For further details, we refer the reader to \cite{EGNO}.

\subsection{Fusion Data} \label{sub:Fu}

The concept of fusion data expands upon the idea of a finite group.

\begin{definition} \label{def:fu}
\emph{Fusion data} consist of a finite set $\{1,2,...,r\}$ with an involution $i \mapsto i^*$, and nonnegative integers $N_{i,j}^k$ satisfying the following conditions for all $i,j,k,t$:
\begin{itemize}
\item (Associativity) $\sum_s N_{i,j}^s N_{s,k}^t = \sum_s N_{j,k}^s N_{i,s}^t$,
\item (Unit) $N_{1,i}^j = N_{i,1}^j = \delta_{i,j}$,
\item (Dual) $N_{i^*,j}^{1} = N_{j,i^*}^{1} = \delta_{i,j}$,
\item (Anti-involution) $N_{i,j}^{k} = N_{j^*,i^*}^{k^*}$.
\end{itemize}
Note that $1^* = 1$. We may represent the fusion data simply as $(N_{i,j}^k)$.
\end{definition}

\begin{proposition}[Frobenius Reciprocity] \label{prop:FrobRec}
For all $i,j,k$, $N_{i,j}^k = N_{k,j^*}^{i} = N_{k^*,i}^{j^*} =  N_{j^*,i^*}^{k^*} = N_{j,k^*}^{i^*} = N_{i^*,k}^j$.
\end{proposition}
\begin{proof}
Starting with (Associativity) and setting $t=1$, we have $\sum_s N_{i,j}^s N_{s,k}^1 = \sum_s N_{j,k}^s N_{i,s}^1$. Applying (Dual), we get $\sum_s N_{i,j}^s \delta_{s,k^*} = \sum_s N_{j,k}^s \delta_{s,i^*}$. Consequently, $N_{i,j}^{k^*} = N_{j,k}^{i^*}$. Substituting $k^*$ with $k$, we obtain $N_{i,j}^{k} = N_{j,k^*}^{i^*}$, which equals $N_{k,j^*}^{i}$ by (Anti-involution). The proposition follows by iterating the equality $N_{i,j}^k = N_{k,j^*}^{i}$ and (Anti-involution). 
\end{proof}

\begin{remark}
We can construct data that satisfy the first three axioms of Definition \ref{def:fu} but not the fourth, proving it is not superfluous. However, (Unit) is redundant when combined with the other axioms, as it is not utilized in the proof of Proposition \ref{prop:FrobRec}. Taken together, (Dual) and (Frobenius Reciprocity) trivially imply (Unit).
\end{remark}

A \emph{fusion ring} $\mathcal{R}$ is a free $\mathbb{Z}$-module equipped with a finite basis $\mathcal{B}=\{b_1, \dots, b_r\}$ and a fusion product defined by $$ b_i  b_j = \sum_k N_{i,j}^k b_k, $$ where $(N_{i,j}^k)$ constitutes fusion data, and a $*$-structure given by $b_i^* := b_{i^*}$. The four axioms for fusion data translate to the following for all $i,j,k$:
\begin{itemize}
\item  $(b_i  b_j)  b_k = b_i  (b_j  b_k)$, 
\item  $b_1  b_i = b_i  b_1 = b_i$,
\item  $\tau(b_i  b_j^*) = \delta_{i,j}$, 
\item  $(b_i  b_j)^* = b_j^*  b_i^*$,
\end{itemize}
where $\tau(x)$ is the coefficient of $b_1$ in the decomposition of $x \in \mathcal{R}$. Consequently, $\mathcal{R}_{\mathbb{C}} := \mathcal{R} \otimes_{\mathbb{Z}} \mathbb{C}$ becomes a finite-dimensional unital $*$-algebra, with $\tau$ extending linearly to a trace (i.e., $\tau(xy) = \tau(yx)$) and an inner product defined by $\langle x,y \rangle := \tau(x y^*)$. Here, $\langle x,b_i \rangle$ is the coefficient of $b_i$ in the decomposition of $x$.

\begin{theorem}[Frobenius-Perron Dimension Theorem {\cite[\S 8]{ENO05}}] \label{thm:FrobPer}
Given a fusion ring $\mathcal{R}$ with basis $\mathcal{B}$ and the corresponding finite-dimensional unital $*$-algebra $\mathcal{R}_{\mathbb{C}}$, there exists a unique $*$-homomorphism $d:\mathcal{R}_{\mathbb{C}} \to \mathbb{C}$ such that $d(\mathcal{B}) \subset \mathbb{R}_{>0}$.
\end{theorem}

The value $d(b_i)$, known as the \emph{Frobenius-Perron dimension} of $b_i$, is denoted as $\FPdim(b_i)$ or simply $d_i$. This is referred to as a \emph{basic $\FPdim$}. The sum $\sum_i d_i^2$ is called the Frobenius-Perron dimension of $\mathcal{R}$, or \emph{global $\FPdim$}, and is denoted $\FPdim(\mathcal{R})$. The sequence $[d_1, d_2, \dots, d_r]$ is called the \emph{type} of $\mathcal{R}$. A fusion ring $\mathcal{R}$ is described as:
\begin{itemize}
\item \emph{Frobenius} (or $1$-Frobenius, or of Frobenius type) if $\frac{\FPdim(\mathcal{R})}{\FPdim(b_i)}$ is an algebraic integer for all $i$,
\item \emph{integral} if $\FPdim(b_i)$ is an integer for all $i$,
\item \emph{pointed} if $\FPdim(b_i)=1$ for all $i$,
\item \emph{commutative} if $b_i  b_j = b_j  b_i$ for all $i,j$, meaning $N_{i,j}^k = N_{j,i}^k$.
\end{itemize}

The \emph{multiplicity} of $\mathcal{R}$ is the maximum value among $N_{i,j}^k$, and its \emph{rank} is $r$, the size of the basis.

\begin{remark} \label{rem:mat}
Fusion data enable a representation of its corresponding fusion ring. Consider the matrices $M_i = (N_{i,j}^k)_{k,j}$. By (Associativity) in Definition \ref{def:fu}, we verify that $M_i M_j = \sum_k N_{i,j}^k M_k$. Additionally, $M_1$ is the identity matrix, and Frobenius Reciprocity ensures that the adjoint matrix $M_i^*$ is $M_{i^*}$. According to Frobenius-Perron Theorem, the operator norm $\|M_i\|$ equals $\FPdim(b_i)$.
\end{remark}

\begin{remark}
The concept of fusion data is a combinatorial reformulation of the fusion ring notion, so any property applicable to a fusion ring is also applicable to its fusion data.
\end{remark}

\begin{definition}[Eigentable] \label{def:eigentable}
Given commutative fusion data $(N_{i,j}^k)$, consider the corresponding fusion matrices $M_i = (N_{i,j}^k)_{k,j}$. The commutativity and the property that $M_i^* = M_{i^*}$ render these matrices normal and thus simultaneously diagonalizable. Let $(D_i)$ denote their simultaneous diagonalization, where $D_i = \mathrm{diag}(\lambda_{i,j})$. We can select $\lambda_{i,1}= \| M_i \| =d_i$. The matrix $(\lambda_{i,j})$ is termed the \emph{eigentable} (or character table) of the fusion data, and the values $c_j:= \sum_i |\lambda_{i,j}|^2$ are called the \emph{formal codegrees}.
\end{definition}

\begin{lemma} \label{lem:dim1}
Let $M \in M_n(\mathbb{Z}_{\ge 0})$. The matrix $M$ is a permutation matrix if and only if $\| M \| = 1$.
\end{lemma}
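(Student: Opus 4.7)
The plan is to handle the two directions separately; only the reverse direction has any substance.

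For the forward direction, I note that a permutation matrix $P$ is orthogonal (as a real matrix), so $P^{T}P = I$ and hence $\|P\| = 1$.

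For the reverse direction, I assume $\|M\| = 1$ and aim to show that $M$ is a permutation matrix. My first step is to apply the operator-norm bound to each standard basis vector: for each column index $j$,
$$ \sum_{i} M_{ij}^{2} \;=\; \|M e_{j}\|^{2} \;\le\; \|M\|^{2}\,\|e_{j}\|^{2} \;=\; 1. $$
Since the $M_{ij}$ are nonnegative integers, this forces each column of $M$ to be either the zero vector or a single standard basis vector. Applying the same reasoning to $M^{T}$ (whose operator norm equals $\|M\|$) gives the analogous statement for rows. Hence $M$ is a $\{0,1\}$-matrix with at most one $1$ in each row and in each column, i.e.\ a partial permutation matrix.

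The main obstacle will be upgrading this partial permutation structure to a genuine permutation, i.e.\ ruling out zero rows and columns. The argument I plan to use goes through singular values: they all lie in $[0,1]$ since $\|M\| = 1$, while $|\det M|$ (their product) is a nonnegative integer. Whenever $M$ is nonsingular---which is automatic in the fusion-ring applications of Remark~\ref{rem:mat}, because a basis element $b_{i}$ with $\FPdim(b_{i}) = 1$ satisfies $b_{i} b_{i^{*}} = b_{1}$ and so $M_{i}$ is invertible---we get $|\det M| \ge 1$, hence $|\det M| = 1$, which forces every singular value to equal $1$. Then $M^{T} M = I$, so $M$ is a nonnegative orthogonal matrix and therefore a genuine permutation matrix.
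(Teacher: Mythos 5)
Your reverse direction takes a genuinely different route from the paper's, and in doing so it uncovers a real defect in the statement. The paper argues by contraposition: if $M$ is not a permutation matrix, it claims that either some column has two nonzero entries off a common row or two columns equal the same $e_k$, and in either case exhibits a vector whose image is longer by a factor of at least $\sqrt{2}$. You instead bound each column by $\sum_i M_{ij}^2=\Vert Me_j\Vert^2\le\Vert M\Vert^2=1$, do the same for $M^T$, and conclude that $M$ is a partial permutation matrix; you then observe that zero rows and columns cannot be excluded from $\Vert M\Vert=1$ alone. You are right to stall there: $M=\mathrm{diag}(1,0,\dots,0)$ has nonnegative integer entries and operator norm $1$ but is not a permutation matrix, so the lemma as literally stated is false, and the paper's own case analysis is non-exhaustive at precisely this point (a matrix whose columns are distinct standard basis vectors except for some zero columns satisfies neither of the paper's cases (1) and (2)). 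Your repair --- adding the hypothesis that $M$ is nonsingular --- is exactly what the intended application requires: in Corollary \ref{cor:dim1} one has $\FPdim(b_i)=1$, hence $b_ib_{i^*}=b_1$ and $M_iM_{i^*}=M_1$ is the identity matrix, so $M_i$ is invertible.

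Two minor comments. Once $M$ is known to be a partial permutation matrix, nonsingularity directly rules out zero columns (a partial permutation matrix with a zero column is singular), so the detour through singular values, $|\det M|$ and $M^TM=I$ is correct but unnecessary. And your forward direction coincides with the paper's, which simply records it as trivial.
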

\begin{proof}
Consider an orthonormal basis $\{e_1, \dots, e_n\}$ for which the entries of $M$ are non-negative integers. If $M$ is not a permutation matrix, then one of the following cases must occur:

\begin{itemize}
\item[(0)] there exists $i$ for which $Me_i = 0$,
\item[(1)] there exist $i,j$ such that $\langle Me_i , e_j \rangle  > 1$,
\item[(2)] there exist $i,j,k$ with $j \neq k$, such that $\langle Me_i , e_j \rangle = \langle Me_i , e_k \rangle = 1$,
\item[(3)] there exist $i,j,k$ with $i \neq j$, such that $Me_i = Me_j = e_k$.
\end{itemize}

However, case (0) implies $\| Me_i \|/\| e_i \|  = 0$, while case (1) leads to $\| Me_i \|/\| e_i \| > 1$. In case (2), it follows that $\| Me_i \|/\| e_i \| \ge \sqrt{2}$. Likewise, case (3) implies $\| M(e_i+e_j) \|/\| e_i+e_j \| = \sqrt{2}$. Each of these cases indicates that $\| M \| > 1$. Conversely, if $M$ is a permutation matrix, it trivially follows that $\| M \| = 1$.
\end{proof}

\begin{corollary} \label{cor:dim1}
For two basic elements $x,y$ of a fusion ring with $\FPdim(x) = 1$, both $x y$ and $y x$ are basic elements, and $\FPdim(x y) = \FPdim(y x) = \FPdim(y)$.
\end{corollary}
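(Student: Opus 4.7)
The plan is to apply Remark \ref{rem:mat} together with Lemma \ref{lem:dim1} to the fusion matrix of $x$, and then handle $yx$ by passing to duals.

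First I would write $x = b_i$ with $\FPdim(b_i)=1$ and consider the left-multiplication matrix $M_i = (N_{i,k}^\ell)_{\ell,k} \in M_r(\mathbb{Z}_{\ge 0})$. By Remark \ref{rem:mat}, $\Vert M_i \Vert = \FPdim(b_i) = 1$, so by Lemma \ref{lem:dim1} the matrix $M_i$ is a permutation matrix. In particular, for $y = b_j$, the vector $M_i e_j$ is a standard basis vector, which means $xy = b_i b_j$ has exactly one nonzero coefficient (equal to $1$) in the basis expansion, i.e.\ $xy$ is itself a basis element.

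For $yx$, the idea is to dualize. Since $d : \mathcal{R}_{\mathbb{C}} \to \mathbb{C}$ is a $*$-homomorphism taking positive real values on $\mathcal{B}$, we have $\FPdim(x^*) = \overline{\FPdim(x)} = 1$. Applying the previous paragraph to the basis element $x^*$ and $y^* = b_{j^*}$ shows that $x^* y^*$ is a basis element. Since the involution $*$ permutes the basis $\mathcal{B}$, the product $yx = (x^*y^*)^*$ is again a basis element.

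Finally, the dimension statement follows because $d$ is a ring homomorphism, so $\FPdim(xy) = \FPdim(x)\FPdim(y) = \FPdim(y)$, and similarly $\FPdim(yx) = \FPdim(y)$. I do not foresee any real obstacle here: the only slightly non-obvious point is noting that the permutation-matrix property of $M_i$ transports directly into the statement that left multiplication by $x$ sends basis elements to basis elements, and that dualizing lets us recover the right-multiplication statement from the left-multiplication one without having to introduce a separate right-action matrix.
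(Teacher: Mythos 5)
Your proof is correct and follows essentially the same route as the paper, whose own proof simply declares the corollary immediate from Remark \ref{rem:mat}, Lemma \ref{lem:dim1}, and the multiplicativity of $\FPdim$; you have merely spelled out the details, including the natural dualization step $yx=(x^*y^*)^*$ to handle right multiplication.
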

\begin{proof}
This follows directly from Remark \ref{rem:mat}, Lemma \ref{lem:dim1}, and the fact that $\FPdim$ is a ring homomorphism.
\end{proof}

\begin{corollary} \label{cor:pointed}
A fusion ring is pointed if and only if its basis forms a finite group under the fusion product.  
\end{corollary}

\subsection{Modular Data}
\label{sub:MD}
Broadly speaking, modular data refers to a fusion data together with two matrices, $S$ and $T = (t_{i,j})$, that generate a projective representation of the modular group $\SL(2,\mathbb{Z})$. To provide a more detailed description, we draw upon \cite[Theorem 2.1]{NRWW} and \cite[\S 8.13, \S 8.18]{EGNO}. Let $\mathbf{i}$  be the imaginary unit.

\begin{definition}
\label{def:MD}
Given a fusion ring $\mathcal{R}$ of rank $r$, type $[d_1, \dots, d_r]$, and fusion data $(N_{i,j}^k)$, let $\mathsf{d}:= \FPdim(\mathcal{R})$ and $\zeta_n := \exp(2 \pi \mathbf{i} /n)$. A (pseudounitary) \emph{modular data} for $\mathcal{R}$ consists of two matrices $S, T \in M_r(\mathbb{C})$ satisfying:

\begin{itemize}
\item $S$ and $T$ are symmetric, $T$ is unitary and diagonal with $T_{1,1}=1$, $S_{1,i}=d_i$ for all $i$, and $S S^* = \mathsf{d} \id$.
\item Verlinde formula:  $N_{i,j}^k = \frac{1}{\mathsf{d}}\sum_l \frac{S_{li} S_{lj} \overline{S_{lk}}}{d_l}$.
\item Twist: let $\theta_i$ be $T_{i,i}$, then $\sum_k N_{i,j}^k \theta_k d_k = \theta_i \theta_j S_{i,j}$.
\item Ribbon structure: $\theta_i=\theta_{i^*}$ (see Remark \ref{rem:rib}).
\item Central charge: $p_\pm := \sum_{i=1}^{r} d_i^2 (\theta_i)^{\pm 1}$. The ratio $p_+/p_-$ is a root of unity, and $p_+=\sqrt{\mathsf{d}}\zeta_8^c$ for some rational number $c$, referred to as the \textbf{central charge}, determined modulo $8$.
\item The matrices $S$ and $T$ afford a projective representation of $\SL(2,\mathbb{Z})$: we have $(ST)^3 = p_+ S^2$, $\frac{S^2}{\mathsf{d}} = C$, $C^2 = \id$, where $C$ is the permutation matrix associated with the involution $i \to i^*$ and satisfies $\Tr(C)> 0$.
\item Cauchy theorem: the set of distinct prime factors of $\ord(T)$ is identical to the distinct prime factors of $\mathrm{norm}(\mathsf{d})$, where $\mathrm{norm}(x)$ denotes the product of the distinct Galois conjugates of the algebraic number $x$.
\item Cyclotomic integers: for all $i, j$, the elements $S_{i,j}$, $S_{i,j}/d_j$ and $T_{i,i}$ are cyclotomic integers. The conductor of $S_{i,j}$ divides $\ord(T)$, which in turn divides $\mathsf{d}^{5/2}$, and there exists $j$ such that $S_{i,j}/d_j \in \mathbb{R}_{\ge 1}$, for all $i$.
\item Frobenius-Schur indicators: for every $i$ and for all $n \ge 1$, the sum $\nu_n(i):= \frac{1}{\mathsf{d}} \sum_{j,k} N_{j,k}^i (d_j\theta_j^n) \overline{(d_k\theta_k^n)}$ is a cyclotomic integer with a conductor that divides both $n$ and $\ord(T)$. Additionally, $\nu_1(i)=\delta_{i,1}$ and $\nu_2(i)=\pm \delta_{i,i^*}$.
\item Anderson-Moore-Vafa equations: $T_{i,i} = e^{2 \pi \mathbf{i} t_i}$, and $\forall i, j, k, l$, the following equation holds in the $\mathbb{Z}$-module $\mathbb{Q}/\mathbb{Z}$:
$$\left(\sum_{p=1}^r N_{i,j}^p N_{p,k}^l \right) (t_i + t_j + t_k + t_l) = \sum_{p=1}^r \left(N_{i,j}^p N_{p,k}^l + N_{i,k}^p N_{j,p}^l + N_{j,k}^p N_{i,p}^l \right) t_p.$$
The \textbf{topological spin} of the $i$-th basic element is the representative $s_i \in (-1/2,1/2]$ of $t_i \in \mathbb{Q}/\mathbb{Z}$. 
\end{itemize}
\end{definition}

We could question the necessity of each component in Definition \ref{def:MD}, particularly whether the Anderson-Moore-Vafa equations can be inferred from the other assumptions.

\begin{remark}
\label{rem:CoHalf}
The Verlinde formula, in conjunction with results from \cite[\S 2]{LPRinter}, implies that the fusion ring $\mathcal{R}$ is commutative. Together with $S$ symmetric and the identity $SS^* = \FPdim(\mathcal{R})\id$, it can be deduced that $\mathcal{R}$ is self-transposable (as discussed in \S \ref{sub:Smat}). Moreover, according to the proof presented in \cite[Proposition 8.14.6]{EGNO}, $\mathcal{R}$ is also half-Frobenius.
\end{remark}

\begin{remark}
\label{rem:rib}
A modular tensor category $\mathcal{C}$ possesses a ribbon structure, which means that the twist $\theta \in \Aut(\Id_{\mathcal{C}})$ satisfies the condition $(\theta_X)^* = \theta_{X^*}$ for every object $X$ within $\mathcal{C}$. Let $(X_i)$ represent the set of simple objects (up to isomorphism) within $\mathcal{C}$. Schur's lemma guarantees that $\theta_{X_i} = \theta_i \Id_{X_i}$, where the scalar $\theta_i$ is consistent with the one described in Definition \ref{def:MD}. Owing to the ribbon structure, we deduce the following:
\[
\theta_{i^*} \Id_{X_{i^*}} = \theta_{X_{i^*}} = (\theta_{X_i})^* = (\theta_i \Id_{X_i})^* = \theta_i (\Id_{X_i})^* = \theta_{i} \Id_{X_{i^*}}.
\]
From this, it follows that $\theta_{i^*} = \theta_{i}$ for all simple objects $X_i$.
\end{remark}

This paper primarily concerns integral fusion categories. In this setting, $\mathsf{d}$ is an integer, so $\mathrm{norm}(\mathsf{d}) = \mathsf{d}$. These categories are pseudounitary and therefore spherical (see \cite[Proposition 9.5.1]{EGNO}). In contexts that are not pseudounitary, Definition \ref{def:MD} would require modifications (as suggested in \cite[Theorem 2.1]{NRWW}) because the equalities $S_{1,i} = \FPdim(b_i)$, for all $i$, may not be valid. Strictly speaking, the assumption arising from these last equalities should be referred to as \emph{positive}, which lies between \emph{pseudounitary} and \emph{unitary}. Recall that \cite[Proposition 9.5.1]{EGNO} states that a pseudounitary fusion category admits a positive structure, but whether it always admits a unitary structure remains an open problem (see \cite[page 284]{EGNO}).

It should be noted that the definition of modular data provided here is so stringent that, as of now, no instances exist that lack a categorification, leading to Question \ref{qu:MDcat}.

\section{From Fusion Data to Modular Data}
\label{sec:FuToMD}
This section elucidates the classification of all potential modular data associated with a given set of fusion data. Initially, we may consider the fusion data to be commutative and half-Frobenius (refer to Remark \ref{rem:CoHalf}). 


\subsection{S-matrix}
\label{sub:Smat}
Consider a commutative fusion data $(N_{i,j}^k)$ of rank $r$, eigentable $(\lambda_{i,j})$, and formal codegrees $(c_j)$ as defined in Definition \ref{def:eigentable}. The objective here is to identify all permutations $q$ of the set $\{1, \dots ,r\}$ such that:
\begin{itemize}
    \item $q(1)=1$,
    \item $d_{q(i)} = d_{i}$ for all $i$,
    \item The matrix $S = (\sqrt{c_1/c_j} \lambda_{i,q(j)})$ is symmetric (i.e. self-transpose).
\end{itemize}
\begin{remark} \label{rk:c1/cj}
The symmetric requirement implies that $$\sqrt{c_1/c_j} = \sqrt{c_1/c_j} \lambda_{1,q(j)} = \sqrt{c_1/c_1} \lambda_{j,q(1)} = d_j,$$ hence we can infer that $c_1/c_j=d_j^2$ for all $j$, as shown in \cite[Example 2.9]{OstR3}.
\end{remark}

If such a permutation $q$ exists (Remark \ref{rk:c1/cj} can serve as an effective necessary condition), the fusion data are referred to as \emph{self-transposable}. This property is exceedingly rare, rendering this step a potent sieve. Using the Verlinde formula, one can reconstruct the fusion data from $S$. It is important to note that we need only consider \emph{cyclotomic} fusion data, i.e. whose eigentable entries are all cyclotomic. 

\subsection{T-matrix}
\label{sub:Tmat}
For the remaining fusion rings $\mathcal{R}$ with fusion data $(N_{i,j}^k)$, we address the Anderson-Moore-Vafa equations:
$$\left(\sum_{p=1}^r N_{i,j}^p N_{p,k}^l \right) (t_i + t_j + t_k + t_l) = \sum_{p=1}^r \left(N_{i,j}^p N_{p,k}^l + N_{i,k}^p N_{j,p}^l + N_{j,k}^p N_{i,p}^l \right) t_p$$
within the $\mathbb{Z}$-module $\mathbb{Q}/\mathbb{Z}$. For each valid solution $t=(t_i) \in (\mathbb{Q}/\mathbb{Z})^r$, if any, the corresponding $T$-matrix is $\mathrm{diag}(e^{2 \pi \mathbf{i} t_i})$.

The solutions to the aforementioned equations are determined using the following method: Initially, the matrix reformulation is represented as $At = 0$, where $A$ is an $m \times n$ matrix over $\mathbb{Z}$ with $m=r^4$ and $n=r$. Subsequently, the Smith normal form is employed, denoted as $D = UAV$, in which $U$ and $V$ are invertible matrices over $\mathbb{Z}$ of sizes $m \times m$ and $n \times n$, respectively, and $D$ is a diagonal $m \times n$ matrix $(\alpha_i \delta_{i,j})$, where the integer $\alpha_i$ is divisible by $\alpha_{i+1}$ for all $i<r$, and $\delta_{i,j}$ is the Kronecker delta. The solutions to $Dx = 0$ are precisely represented by the vectors $(k_i/\alpha_i)$, where $0 \le k_i < \alpha_i$. Consequently, we have $U^{-1} D V^{-1} t = 0$, which simplifies to $D V^{-1} t = 0$. Therefore, the solutions can be expressed as $t = Vx$.

A complete listing of potential $T$-matrices requires considering all vectors $(k_i/\alpha_i)$, where $0 \leq k_i < \alpha_i$. As a result, there are $p=\prod_i \alpha_i$ possible combinations. This task remains manageable up to rank $11$. However, in certain case of rank above,
the value of $p$ becomes too large.
But a miraculous circumstance arises (referred to as the \textbf{magic criterion}): for all such cases, if one abstractly considers the $T$-matrix with variables $(k_i)$, then for every determined $S$-matrix $S$ in \S \ref{sub:Smat}, the abstract product $(ST)^3$ consistently exhibits a zero where it should not, specifically at an entry $(i,i^*)$ for some $i$. This is because $(ST)^3/p_+ = S^2 = \mathsf{d}C$, where $C$ is the duality matrix (realizing the involution $i \to i^*$, and thus $C_{i,i^*} = 1$, non-zero), as defined in Definition \ref{def:MD}. The function \texttt{MagicCriterion} (also covered by the function \texttt{STmatrix}) can verify this. 

\begin{question} \label{Q:magic}
Can the aforementioned \emph{magic criterion} be reformulated at the level of fusion data?
\end{question}

\subsection{Models for Rank 11}
\label{sub:mod11}

This subsection benefited from helpful discussions with César Galindo and Eric Rowell. Among the eight modular data described in \cite[\S\ref{subsubR11}]{ABPPsupp}, the last one corresponds exactly to the modular fusion category of finite-dimensional representations of the affine Kac-Moody algebra of type $ D_4 $ at level 2 (see \cite{kac90}), also known as $SO(8)_2$. This can be checked using \cite{GMR24}, or a recent version of SageMath \cite{sage}:  

\begin{verbatim}
sage: D42 = FusionRing("D4", 2)                           # SO(8)_2
sage: len(D42.basis())
11                                                        # Rank
sage: [(b.q_dimension()) for b in D42.basis()]
[1, 2, 2, 2, 2, 1, 2, 2, 1, 2, 1]                         # Type
sage: [x.twist()/2 for x in D42.basis()]
[0, 7/16, 3/4, 7/16, 7/16, 0, 15/16, 15/16, 0, 15/16, 0]  # Topological spins
\end{verbatim}

We need the factor \verb*|x.twist()/2| because SageMath adopts the convention $ e^{\pi \mathbf{i} t_i} $, whereas we use $ e^{2\pi \mathbf{i} t_i} $. Up to permutation and modulo 1, we recover the expected topological spins:
\[
[0, 0, 0, 0, -1/16, -1/16, -1/16, -1/4, 7/16, 7/16, 7/16].
\]

By the proof of \cite[Theorem 3.18]{GMR24}, the basic $ \FPdims $ remain invariant under zesting. Furthermore, \cite[Theorem 5.13]{GMR24} gives a zested modular fusion category whose topological spins match those in the third-to-last example of \cite[\S\ref{subsubR11}]{ABPPsupp}:
\[
[0, 0, 0, 0, -5/16, 7/16, -5/16, -1/4, 3/16, -1/16, 3/16].
\]
This result can be verified using the script \verb*|ZestD42.sage|, shared by César Galindo, located in the \verb*|Codes/SageMath| folder of the repository \cite{data}. Note that these two modular fusion categories are not Galois conjugate, since the first has four distinct topological spins, while the second has six. The remaining six MDs are obtained through Galois conjugation. To see this, simply apply the following SageMath function to the lists above.
\begin{verbatim}
def GaloisConjugates(L):
    n=lcm([denominator(i) for i in L])
    R=[]
    for m in range(n):
        if gcd([m,n])==1:
            R.append([((n*m*i)%n)/n for i in L])
    return R
\end{verbatim}
Recall that the fusion data remains invariant under Galois conjugation; see \cite{DHW13} for further details.

\section{Egyptian Fractions with Squared Denominators} \label{sec:Egy}

A $(q,r)$-Egyptian fraction with squared denominators is a sum of the form
\[
q = \sum_{i=1}^r \frac{1}{s_i^2},
\]
where $q, r, s_i \in \mathbb{Z}_{\ge 1}$ and $s_1 \ge s_2 \ge \cdots \ge s_r \ge 1$. In the context of classifying potential Grothendieck rings of modular integral fusion categories (or more generally, half-Frobenius integral fusion rings), we can assume that each $s_i$ divides $s_1$. By removing terms equal to $1$, we can further reduce to the case $s_i \ge 2$ for all remaining terms, which immediately gives the bound $q \le r/4$. The following proposition formalizes this reduction.

\begin{proposition}\label{prop:reduction}
Consider a $(q,r)$-Egyptian fraction as defined above with $s_i \mid s_1$ for all $i$. Let $k$ be the number of indices $i$ with $s_i = 1$. Then the reduced $(q-k,r-k)$-Egyptian fraction 
\(
q - k = \sum_{i=1}^{r-k} 1/s_i^2
\) 
satisfies $s_i \ge 2$, inherits the divisibility condition, and obeys
\(
q - k \le (r - k)/4.
\) 
Conversely, any $(q,r)$-Egyptian fraction with these properties is obtained by appending $k$ unit terms $1 = 1/1^2$ to a reduced $(q-k,r-k)$-Egyptian fraction.
\end{proposition}

\begin{proof}
Since $s_1 \ge \cdots \ge s_r$, all terms equal to $1$ occur at the end. Removing these $k$ terms yields a $(q-k, r-k)$-Egyptian fraction with $s_i \ge 2$ and $s_i \mid s_1$. For these remaining terms, $1/s_i^2 \le 1/4$, so
\(
q - k = \sum_{i=1}^{r-k} 1/s_i^2 \le \sum_{i=1}^{r-k} 1/4 = (r - k)/4.
\)
The converse is immediate: appending $k$ unit terms to any reduced $(q-k,r-k)$-Egyptian fraction recovers a valid $(q,r)$-Egyptian fraction satisfying the ordering and divisibility conditions.
\end{proof}

The following steps outline our methodology for obtaining the $9025$ possible types mentioned in \S \ref{sub:types}:
\begin{itemize}
\item Employ the function \texttt{all\_rep} (available in \texttt{rep\_sq.sage} in \cite{MaxScripts}) for $1 \le r \le 13$ and $1 \le q \le r/4$.
\item Refine the classification by incorporating additional $1$s as described previously.
\item Construct all possible types using $d_i = s_1/s_i$. 
\end{itemize}

From rank 13 onward, we also used refined versions of the \texttt{all\_rep} function, as explained below, all available also in \texttt{rep\_sq.sage} in \cite{MaxScripts}. The module employs parallelization through SageMath's recursively enumerable sets (RES) functionality to enhance efficiency. The module provides the following general functions representing a given rational number as Egyptian fractions with squared denominators:
\begin{itemize}
    \item \texttt{all\_rep(q,N)}: computes all length-$N$ representations of the rational number $q$.
    \item \texttt{all\_cf\_rep(q,cf)}: computes the representations of $q$ with numerators specified by the list cf.
\end{itemize}

Additionally, the module offers specialized functions for computing representations that align with the fusion ring types discussed in the paper:
\begin{itemize}
    \item \texttt{all\_rep\_MNSD(q,N)}: same as above under MNSD constraint, see \S \ref{sec:MNSD}.
    \item \texttt{all\_rep\_th87(N)}: computes the length-$N$ representations satisfying Theorems \ref{thm:RoSc} and \ref{thm:StrongPrime}.
    \item \texttt{all\_rep\_above\_r(N)}: same as above where at least one denominator has prime factor $> N$, see \S \ref{sub:ArCo}.
    \item \texttt{all\_rep\_above\_half\_r(N)}: same as above where at least one denominator has prime factor $> N/2$.
    \item \texttt{all\_rep\_primes23(N)}: same as above where denominators are restricted to have prime factors $2$ and $3$ only.
\end{itemize}

Note that functions may include additional optional parameters not listed here, which can be utilized to refine the computations further.

%
%
%

\section{Type Criteria}
\label{sec:crit}

In this section, we delineate criteria that were employed to exclude certain candidates from being the type of a fusion ring. A \emph{type} refers to a list denoted by $t=[[d_1,m_1],[d_2,m_2], \dots, [d_s, m_s]]$, where the conditions $1=d_1 < d_2 < \cdots < d_s$ and $m_i \ge 1$ for all indices $i$ are satisfied. Such a type is characterized as:
\begin{itemize}
\item \emph{trivial} if $t=[[1,1]]$,
\item \emph{pointed} if $t=[[1,m]]$ for some $m$,
\item \emph{perfect} if $m_1=1$,
\item \emph{integral} if each $d_i$ is an integer.
\end{itemize}
A type $t=[[d_1,m_1],[d_2,m_2], \dots, [d_s, m_s]]$ may sometimes be represented simply as $$[d_1, \dots, d_1, d_2, \dots, d_2, \dots, d_s, \dots, d_s],$$ where each $d_i$ appears $m_i$ times. Thus, we can rephrase the notation for a type of rank $r$ as $[d'_1, \dots, d'_r]$ with the condition $1 = d'_1 \le d'_2 \le \cdots \le d'_r$.
The criteria described in this section are primarily verified using modular arithmetic, with the sole exception of Proposition \ref{prop:bur}, and are presented in order of increasing computational complexity. These checks are implemented in the \verb|TypeCriteria| function within the \texttt{TypeCriteria.sage} script, as referenced in \cite{data}.  
Proposition \ref{prop:bur} is valid at the level of fusion categories but does not hold for fusion rings. Accordingly, the function \verb|TypeCriteriaRing| corresponds to \verb|TypeCriteria| with this proposition omitted. Nevertheless, the proposition is expected to remain valid for half-Frobenius fusion rings; this has been verified for all cases up to rank $13$ in \cite{BP25}.  
Applying these criteria excludes $5655$ of the $9025$ types enumerated in \S \ref{sub:types}, representing over $62\%$ of the total, and completes in approximately one minute. The breakdown of excluded types is presented in Table~\ref{tab:breakdown}.
\begin{table}[h]
\[
\begin{array}{c|ccccccccccccc}
\text{Rank} & 1 & 2 & 3 & 4 & 5 & 6 & 7 & 8 & 9 & 10 & 11 & 12 & 13 \\
\hline
\text{\# Types} & 1 & 1 & 1 & 1 & 2 & 3 & 3 & 7 & 11 & 42 & 144 & 812 & 7997 \\
\hline
\text{\# Excluded Types} & 0 & 0 & 0 & 0 & 0 & 1 & 1 & 3 & 5 & 28 & 90 & 535 & 4992
\end{array}
\]
\caption{Number of excluded types per rank}
\label{tab:breakdown}
\end{table}
For the remaining types, we will utilize the fusion ring solver, as elaborated in \S \ref{sec:FRSolver}.

\subsection{Small Perfect Type}

In this subsection, we investigate perfect fusion rings under two constraints: either the number of distinct basic $\FPdim$s is small, or the global $\FPdim$ has few prime factors.

\begin{theorem} \label{thm:small}
Any perfect integral fusion ring of type $[[d_1,m_1],[d_2,m_2], \dots, [d_s, m_s]]$, $s \le 3$, is trivial.
\end{theorem}
\begin{proof}
See the arXiv versions of this paper (up to v7) for details.
\end{proof}

Consequently, every non-trivial perfect integral fusion ring has rank at least $4$, since a fusion ring of rank at most $3$ has at most three distinct basic $\FPdim$s, and is therefore trivial by Theorem \ref{thm:small}. This theorem cannot be extended to $s=4$, as demonstrated by $\Rep(A_5)$, the representation category of the alternating group $A_5$, which has type $[[1,1],[3,2],[4,1],[5,1]]$. The proof of this theorem is deferred to the forthcoming paper \cite{BP25}, where it is more suitably placed. It effectively rules out only the following four types among the $9025$ presented in \S \ref{sec:Egy}, $$[[1, 1], [2, 2], [3, 3]], \ [[1, 1], [2, 6], [5, 3]], \ [[1, 1], [2, 2], [3, 7]], \ [[1, 1], [3, 7], [4, 5]].$$
All of these types are also excluded at the categorical level by Proposition \ref{prop:bur}, since their $\FPdim$ are of the form $p^a q^b$.

\begin{proposition} \label{prop:bur}
There exists no non-trivial perfect integral fusion category (over $\mathbb{C}$) with $\FPdim = p^a q^b$.
\end{proposition}
\begin{proof}
This follows directly from \cite[Theorem 1.6 and Proposition 4.5(iv)]{ENO11}.
\end{proof}

We note that Proposition \ref{prop:bur} does not generalize to fusion rings, as there exists a perfect fusion ring with $\FPdim = 143 = 11 \cdot 13$ and type $[1,4,4,5,6,7]$. This example is not $1$-Frobenius. Whether the proposition extends to the $1$-Frobenius case remains an open question. For reference, the extension holds in the half-Frobenius case up to rank $13$ \cite{BP25}. Theorem \ref{thm:small} and Proposition \ref{prop:bur} serve as criteria checked by the function \texttt{SmallPerfect}:

\begin{verbatim}
sage: l=[1, 2, 2, 3, 3, 3]
sage: SmallPerfect(l)
False
\end{verbatim}

\subsection{Gcd Criterion} \label{sub:gcd}

\begin{lemma}
\label{lem:gcd0}
Consider a non-pointed fusion ring of type $[d_1,d_2, \ldots, d_r]$. For all $i$ such that $d_i > 1$, let $Z_i$ be the set of indices $j \neq 1$ for which $N_{i,i^*}^j$ is nonzero, and let $g_i$ be $\gcd_{j \in Z_i}(d_j)$. Then it holds that $d_i^2 \equiv 1 \pmod{g_i}$ and $\gcd(d_i, g_i) = 1$.
\end{lemma}

\begin{proof}
First, note that $Z_i$ is non-empty, which implies that $g_i \neq 0$. According to the Frobenius-Perron theorem, the dimension equation, and the Dual axiom, we have
$$d_i^2 = d_i d_{i^*} = \sum_k d_k N_{i,i^*}^k = 1 + \sum_{j \in Z_i} d_j N_{i,i^*}^j = 1 + Kg_i,$$
where $K$ is some integer. Consequently, $d_i^2 \equiv 1 \pmod{g_i}$, and $0 \equiv 1 \pmod{\gcd(d_i, g_i)}$. The lemma follows.
\end{proof}

\begin{proposition}
\label{prop:gcd1}
Consider a non-trivial perfect fusion ring of type $[d_1,d_2, \ldots, d_r]$. Take $i>1$, let $Z'_i$ be the set of indices $j \neq 1$ for which $d_j < d_i^2$, and let $g'_i$ be $\gcd_{j \in Z'_i}(d_j)$. Then $g'_i = 1$. In particular, $\gcd(d_2, \ldots, d_r)=1$.
\end{proposition}

\begin{proof}
Note that if $N_{i,i^*}^j$ is nonzero, then $d_i^2 \ge d_j$. Hence, following the notation in Lemma \ref{lem:gcd0}, $Z_i$ is included in $Z'_i$, and as a result, $g'_i$ divides $g_i$. Due to perfectness, we have $d_i > 1$, implying $d_i^2 > d_i$ and therefore $i$ belongs to $Z'_i$. Consequently, $g'_i$ divides $d_i$. However, according to Lemma \ref{lem:gcd0}, $g'_i = 1$. For the final assertion, note that $\gcd(d_2, \ldots, d_r)$ is a divisor of $g'_2 = 1$.
\end{proof}

The SageMath code implementing the criterion from Proposition \ref{prop:gcd1} is located in the function \texttt{GcdCriterion} within the file \texttt{TypeCriteria.sage}, available at \cite{data}. This criterion eliminates over 37\% of the perfect types listed in \S \ref{sec:Egy}, such as
\begin{verbatim}
sage: l=[1, 2, 2, 2, 6, 14, 14, 21, 21, 21]
sage: GcdCriterion(l)
False
\end{verbatim}
The count per rank is presented in Table~\ref{tab:gcd}.
\begin{table}[h]
\[
\begin{array}{c|cccccc}
\text{Rank} & 8 & 9 & 10 & 11 & 12 & 13 \\
\hline
\text{\# Excluded Perfect Types} & 1 & 1 & 7 & 19 & 212 & 2474
\end{array}
\]
\caption{Number of excluded types per rank for the gcd criterion}
\label{tab:gcd}
\end{table}
\subsection{Type Test}
\label{sub:TypeTest}
Let's consider a type $t=[d_1, \dots , d_r]$ with $1=d_1 \le \cdots \le d_r$ and $d_2>1$ (signifying that it is perfect). If there is an index $i$ and $g_i>1$ such that $g_i$ divides every $d_j$ not equal to $1$ or $d_i$, and $d_i$ is coprime with $g_i$, then assume a fusion ring of this type exists with a basis $\{b_1, \dots, b_r\}$ where $d_k = \FPdim(b_k)$. 

\begin{lemma}
\label{lem:TypeTest}
For every $j$ with $d_j \neq 1$ and $d_j \neq d_i$, the following equation holds:
$$\sum_{k; \ d_k = d_i} N_{j,j^{*}}^k \equiv -1/d_i \mod g_i.$$
\end{lemma}
\begin{proof}
For each $j$ with $d_j \neq 1$ and $d_j \neq d_i$, we have:
$$ b_j b_{j^*} = b_1 + \sum_{k; \ d_k = d_i} N_{j,j^{*}}^k b_k + \sum_{k; \ d_k \neq 1, d_i} N_{j,j^{*}}^k b_k.$$
By applying $\FPdim$ and reducing modulo $g_i$, we obtain:
$$0 = 1 + xd_i \mod g_i,$$
where $d_i$ has a multiplicative inverse modulo $g_i$. Therefore, $x \equiv -1/d_i \mod g_i$.
\end{proof}

Given an integer $a_{d_i}$ such that $0 \le a_{d_i} < g_i$ and $a_{d_i} \equiv -1/d_i \mod g_i$, let $S$ be the set containing all such $d_i$. From Lemma \ref{lem:TypeTest}, for every $j \neq 1$, the inequality below must hold:
$$ d_j^2 \ge 1 + \sum_{d \in S \setminus \{d_j\}} a_d d, $$
thus if the inequality does not hold, $t$ cannot be a type of a fusion ring. Furthermore, if the set $\{k \mid d_k = d_j\}$ is a singleton, we can use a stronger inequality:
$$d_j^2 \ge 1 + b_jd_j + \sum_{d \in S \setminus \{d_j\}} a_d d, $$
with $0 \le b_j < g_j^2$ and $b_j\equiv d_j - \frac{1}{d_j} \mod g_j^2$. 

The SageMath code implementing this criterion can be found in the function \texttt{TypeTest} within the file \texttt{TypeCriteria.sage}, available at \cite{data}. This criterion eliminates over 36\% of the perfect types listed in \S \ref{sec:Egy}, such as 
\begin{verbatim}
sage: T=[[1,1],[5,1],[7,1],[35,1],[60,1],[140,2],[210,3]]
sage: TypeTest(T)
False
\end{verbatim}
The count per rank is presented in Table~\ref{tab:typetest}.
\begin{table}[h]
\[
\begin{array}{c|cccccc}
\text{Rank}  & 8 & 9 & 10 & 11 & 12 & 13   \\ \hline
\#\text{Excluded Perfect Types} & 1&1&12&37&249&2380 
\end{array}
\]
\caption{Number of excluded types per rank for the type test}
\label{tab:typetest}
\end{table}
\subsection{Local Criterion} \label{sub:LocalCrit}
Consider a type $t=[[d_1,m_1],[d_2,m_2], \dots, [d_s, m_s]]$. Assume the existence of $g, i_0>1$ such that $g$ divides each $d_i$ for all indices $i$ not in the set $\{1,i_0\}$, and $d_{i_0}$ is coprime with $g$. Let $(d,m):=(d_{i_0},m_{i_0})$. If it corresponds to a fusion ring with a basis $\{b_{1-m_1}, \dots, b_0,b_1, \dots, b_{r-1}\}$, where $b_0$ is the unit, $\FPdim(b_i) = 1$ for $i \le 0$, and $\FPdim(b_j) = d$ for $j \in \{1, \dots, m\}$, then the following lemma applies:

\begin{lemma}
\label{lem:g^2}
For each $i \in \{1, \dots, m\}$, the equation below is valid:
$$\sum_{j,k=1}^m N_{i,j}^k \equiv md - \frac{m_1}{d} \mod g^2,$$
and for all $j>m$, the integer $g$ divides $\sum_{k=1}^m N_{i,j}^k$.
\end{lemma}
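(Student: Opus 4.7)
The plan is to expand $b_i \cdot \bigl(\sum_{j=1}^{m} b_j\bigr)$, take Frobenius--Perron dimensions, and analyze the resulting equation modulo $g^2$ by splitting basis elements into three strata according to their FPdim: the $m_1$ invertible ones ($k \le 0$), the $m$ elements of dimension $d$ ($1 \le k \le m$), and the rest ($k > m$, all with $d_k$ divisible by $g$). Since (2) will be needed as an input to (1), I would prove (2) first.

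For (2): fix $j > m$. I first claim $N_{i,j}^k = 0$ whenever $k \le 0$. Indeed, Frobenius reciprocity gives $N_{i,j}^k = N_{i,k^*}^{j^*}$, and since $b_{k^*}$ is invertible, Corollary \ref{cor:dim1} says $b_i b_{k^*}$ is a single basis element of FPdim $d$; as $d_{j^*} = d_j$ is divisible by $g$ while $d$ is coprime to $g$ and $d \ne d_j$ (because $j > m$), this single summand is not $b_{j^*}$. Plugging into the dimension equation
$$d \cdot d_j \;=\; \sum_{k} N_{i,j}^k d_k$$
and reducing modulo $g$ kills the $k > m$ terms (since $g \mid d_k$) and the $k \le 0$ terms (by the claim), leaving $0 \equiv d \sum_{k=1}^m N_{i,j}^k \pmod{g}$. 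Invertibility of $d$ modulo $g$ then gives (2).

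For (1): set $\alpha_k := \sum_{j=1}^m N_{i,j}^k$, so the product $b_i \sum_{j=1}^m b_j$ has FPdim equation
$$m d^{2} \;=\; \sum_{k} \alpha_k d_k.$$
Splitting by stratum: for each $k \le 0$, Frobenius reciprocity and Corollary \ref{cor:dim1} give $\alpha_k = \sum_{j=1}^m N_{i,k^*}^{j^*} = 1$ (the unique basis element $b_i b_{k^*}$ has FPdim $d$ and hence lies in $\{b_1,\dots,b_m\}$), contributing $m_1$ in total; for $1 \le k \le m$, the contribution is $d \sum_{j,k=1}^m N_{i,j}^k$; for $k > m$, I rewrite $\alpha_k = \sum_{j=1}^m N_{i,k^*}^{j^*} = \sum_{j=1}^m N_{i,k^*}^{j}$ using that $\{1,\dots,m\}$ is stable under duality (since $\FPdim(b^*)=\FPdim(b)$), and observe that $k^* > m$ so part (2) already proved forces $g \mid \alpha_k$; combined with $g \mid d_k$, each such term is divisible by $g^2$. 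Thus modulo $g^2$,
$$m d^{2} \;\equiv\; m_1 + d \sum_{j,k=1}^{m} N_{i,j}^{k} \pmod{g^{2}},$$
and inverting $d$ modulo $g^2$ (possible since $\gcd(d,g)=1$) yields (1).

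The only non-routine step is the interlocking of the two parts: (1) requires the divisibility of $\alpha_k$ by $g$ for $k>m$, and this is precisely (2) re-expressed via Frobenius reciprocity with the roles of $j$ and $k^*$ swapped. Once that dependency is spotted, everything else is bookkeeping in the FPdim equation; the duality-stability of $\{1,\dots,m\}$ is a one-line observation and the invertibility of $d$ modulo $g^2$ is automatic from $\gcd(d,g)=1$.
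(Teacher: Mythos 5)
Your proof is correct and follows essentially the same route as the paper's: the paper expands $b_{i^*}\sum_{k=1}^m b_k$ where you expand $b_i\sum_{j=1}^m b_j$, but both arguments kill the invertible stratum via Corollary \ref{cor:dim1}, establish the divisibility claim first from the dimension equation modulo $g$, and then feed it back into the dimension equation modulo $g^2$ to isolate $\sum_{j,k=1}^m N_{i,j}^k$. The one slip is your form of Frobenius reciprocity: $N_{i,j}^k=N_{i,k^*}^{j^*}$ actually equals $N_{j,i}^k$ and so is only valid for commutative rings, whereas the lemma is stated (and used) for arbitrary fusion rings; replacing it by the genuine identity $N_{i,j}^k=N_{k^*,i}^{j^*}$ from Proposition \ref{prop:FrobRec} changes nothing in your argument, since Corollary \ref{cor:dim1} applies to multiplication by an invertible basis element on either side.
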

\begin{proof}
For any $i \in \{1,\dots, m\}$ and $j>m$, since $\FPdim(b_i) \neq \FPdim(b_j)$, by Corollary \ref{cor:dim1} and Frobenius reciprocity, we have:
$$ b_i b_j  = \sum_{k \ge 1} N_{i,j}^k b_k = \sum_{k=1}^m N_{i,j}^k b_k + \ldots,$$
Applying $\FPdim$ and reducing modulo $g$, we conclude that:
$$ d \sum_{k=1}^m N_{i,j}^k \equiv 0 \mod g,$$
which implies that $g$ divides $\sum_{k=1}^m N_{i,j}^k$. For each $i \in \{1,\dots, m\}$, the sum over the basis elements yields:
$$b_{i^*} \sum_{k=1}^m b_k = \sum_{s \le 0} b_s + \sum_{j=1}^m \left(\sum_{k=1}^m N_{i,j}^k\right) b_j + \sum_{j>m} \left(\sum_{k=1}^m N_{i,j}^k\right) b_j.$$
After applying $\FPdim$, we obtain $md^2 = m_1 + xd + yg^2 $, hence $ x \equiv md - \frac{m_1}{d} \mod g^2$.
\end{proof}

For a type, we can analyze the partitions of $md^2 - xd - m_1$ in the form $\sum_{i \not \in \{1,i_0\}} a_i d_i$, with $x \equiv md - \frac{1}{d} \mod g^2$ and $a_i \equiv 0 \mod g$. The SageMath code performing this analysis can be found in the function \texttt{LocalCriterion} within the file specified earlier, also available at \cite{data}. This criterion, which can rule out types when no suitable partitions are found, is further detailed in the following example.

\begin{verbatim}
sage: T=[[1,1],[1295,2],[3990,1],[4218,1],[24605,1],[42180,1],[98420,2],[147630,3]]
\end{verbatim}

We can apply Lemma \ref{lem:g^2} to the triples $(d,m,g) = (1295,2,19), (3990,1,37), (4218,1,5)$. Subsequently, we obtain $md - \frac{1}{d} \equiv 126, 1135, 11 \pmod{g^2}$ for each respective triple. The application of the function \texttt{LocalCriterion} to the triple $(d,m,g) = (1295,2,19)$ enables us to eliminate the type $T$ in less than $0.1$ second.

\begin{verbatim}
sage: %time LocalCriterion(T, 1295, 2, 19)
CPU times: user 82.8 ms, sys: 0 ns, total: 82.8 ms
Wall time: 82.3 ms
[]
\end{verbatim}

However, we cannot employ the triple $(d,m,g) = (3990,1,37)$, as it yields $55$ solutions.

\begin{verbatim}
sage: L = LocalCriterion(T, 3990, 1, 37); len(L)
55
\end{verbatim}

The function \texttt{LocalCriterionAll} manages all operations. It requires only two inputs: a type and a time limit.

\begin{verbatim}
sage: %time LocalCriterionAll([[1, 1], [2, 2], [3, 3], [6, 4]],1)
CPU times: user 1.44 ms, sys: 4.4 ms, total: 5.84 ms
Wall time: 9.55 ms
False
\end{verbatim}

Its application to the list in \S \ref{sec:Egy} led to the exclusion of several types per rank, as presented in Table~\ref{tab:local}.
\begin{table}[h]
\[
\begin{array}{c|cccccccc}
\text{Rank}  & 6 & 7 & 8 & 9 & 10 & 11 & 12 & 13   \\
\hline
\text{\# Excluded Types} & 1 & 1 & 3 & 5 & 21 & 63 & 344 & 2852     \\
\hline 
\text{\# Excluded Perfect Types} & 1 & 1 & 2 & 2 & 14 & 37 & 238 & 2173 \\
\end{array}
\]
\caption{Number of excluded types per rank for the local criterion}
\label{tab:local}
\end{table}
It is noteworthy that this criterion alone suffices to eliminate all perfect types up to rank $9$. Therefore, it can be stated conclusively that no non-trivial perfect integral half-Frobenius fusion rings, and thus no non-trivial perfect modular integral fusion categories, exist up to rank $9$. The use of a fusion ring solver as detailed in \S \ref{sec:FRSolver} can extend this conclusion up to rank $12$, as discussed in \S \ref{sec:Half}, and then to rank $13$ in the categorical case, see \S \ref{sec:R13}.

\section{Enhanced Fusion Ring Solver Using Normaliz}
\label{sec:FRSolver}
A fusion ring solver is a computational tool designed to receive a particular type as input and output all corresponding fusion rings of that type. Initially, \S \ref{sub:user} provides a brief introduction to the highly intuitive user interface of Normaliz \cite{Norma} from version 3.10.2, and \S \ref{sub:Norma} offers an overview of Normaliz's goals and outlines the adjustments made to support the linear and polynomial constraints specific to fusion rings. The last two subsections \S \ref{sub:full} and \S \ref{sub:DimPar} contains the approach combining SageMath and Normaliz \emph{before} version 3.10.2 (and show the system of equations explicitly). They introduce two versions of a fusion ring solver: the full version, which is discussed in \S \ref{sub:full} and addresses both dimension equations and associativity equations, and the partition (intermediate) version, which is detailed in \S \ref{sub:DimPar} and focuses on a simplified set of dimension equations through the implementation of a partition.
\subsection{Normaliz User Interface for Fusion Rings}
\label{sub:user}

Starting from version 3.10.2, Normaliz \cite{Norma} offers a streamlined user interface for computing fusion rings. This is illustrated through the input file named \verb*|bracket_4.in|, found in the \verb*|example| directory of the Normaliz distribution:

\footnotesize
\begin{verbatim}
amb_space auto
fusion_type
[1,1,2,3,3,6,6,8,8,8,12,12]
fusion_duality
[0,1,2,3,4,5,6,7,8,9,11,10]
\end{verbatim}
\normalsize

It's important to note that in the duality, indices start from 0. From this input, Normaliz generates the linear and quadratic equations defining the fusion data. The default computational goal for this input is \verb*|FusionRings|.

To run this command on Linux or MacOS, use the following command line syntax, assuming \verb*|example| is the current directory. For some progress information on the terminal, you can add the \verb*|-c| flag. On a modern laptop, the computation typically takes less than 10 seconds and requires about 2.4 GB of RAM.

\footnotesize
\begin{verbatim}
path/to/normaliz bracket_4
\end{verbatim}
\normalsize

A brief informal explanation of the algorithm used to solve the system of equations is presented in \S \ref{sub:Norma}.

The results are detailed in the \verb*|bracket_4.out| file, beginning with a preamble:

\footnotesize
\begin{verbatim}
148 fusion rings up to isomorphism
0 simple fusion rings up to isomorphism
148 nonsimple fusion rings up to isomorphism
Embedding dimension 231
dehomogenization
0 0 0 0 0 0 0 0 0 0 0 0 0 0 0 0 0 0 0 0 0 0 0 0 ... 0 0 0 0 0 1
\end{verbatim}
\normalsize

The $148$ fusion rings correspond to the orbits of the set of lattice points with respect to the symmetries of the equation system. These symmetries are observed under permutations of the type vector that adhere to the Frobenius-Perron equations and are compatible with the duality. Put simply, we have identified $148$ pairwise nonisomorphic fusion rings, classified according to their type and duality. They are automatically categorized into simple and nonsimple fusion rings. To limit the computation exclusively to simple fusion rings, one can modify the input file by including the \verb*|SimpleFusionRings| option.

The term \emph{embedding dimension} refers to the number of coordinates utilized during the computation. The rationale behind the selection of these coordinates is discussed in \S \ref{sub:full}. The final component, represented by the number $1$ in the dehomogenization process, signifies that the equations' right-hand side corresponds to the last coordinate of the solutions. This component is not included in the fusion data.

The latter part of the output file details the fusion rings represented by lattice points:

\footnotesize
\begin{verbatim}
0 simple fusion rings up to isomorphism:

148 nonsimple fusion rings up to isomorphism:
0 0 0 0 0 0 0 0 0 0 1 0 0 0 0 0 0 0 0 0 1 0 0 0 ... 1 3 3 1 1 1
...
\end{verbatim}
\normalsize

To generate fusion data from the lattice points, add the \verb*|FusionData| option in the input file. The result includes a list of fusion data for each fusion ring, presented as a series of matrices $M_i$, where $i=1,\dots,r$ and $r$ is the rank of the fusion ring ($r=12$ in our example). The matrix $M_i$ comprises the elements $N_{ij}^k$, with row index $k$ and column index $j$.

The input file for solving the dimension partition version (see \S \ref{sub:DimPar}), for instance, \verb*|bracket_3_part.in|, is as follows:

\footnotesize
\begin{verbatim}
amb_space auto
fusion_type_for_partition
[1,1,2,3,3,6,6,8,8,8,12,12]
\end{verbatim}
\normalsize

Here, the default computation goal is \verb*|SingleLatticePoint|, focusing on the solvability of the system.

For additional information and further options, refer to Appendix H of the Normaliz manual (\verb*|Normaliz.pdf|), available in the \verb*|doc| directory of the Normaliz distribution or online at \cite{NorManual}. This includes details on restricting computations to fusion rings that meet certain criteria for modular categorification.
  
\subsection{Normaliz and its Approach to Fusion Rings}
\label{sub:Norma}
Normaliz \cite{Norma} is an open source software for discrete convex geometry and its algebraic aspects. Readers are referred to Bruns and Gubeladze \cite{BrGu} for detailed terminology and a comprehensive discussion. Normaliz is designed to solve Diophantine systems of linear inequalities, equations, and congruences with integer coefficients. Additionally, it calculates enumerative information such as multiplicities (which correspond to geometric volumes) and Hilbert series. Objects in Normaliz can be defined either by generators, such as the extreme rays of cones, bases of lattices, and vertices of polytopes, or by constraints like inequalities, equations, and congruences. For systems with coefficients in real algebraic number fields, Normaliz can execute fundamental operations like convex hull computation and its dual, vertex enumeration. Moreover, it is capable of computing lattice points within (bounded) polytopes over real algebraic number fields, facilitating applications to non-integral fusion rings. In the context of fusion rings, it is crucial that lattice points within polytopes can be subjected to constraints imposed by polynomial equations and inequalities. Each release of Normaliz includes source code, comprehensive documentation, sample examples, a testing suite, and pre-compiled binaries for Linux, Mac OS, and MS Windows systems.

For lattice points in generic polytopes denoted by $P$, Normaliz employs the project-and-lift algorithm. It sequentially projects $P$ onto coordinate hyperplanes until reaching zero dimensions and then lifts the lattice points back up. If $P'$ is a projection of $P$ onto a coordinate hyperplane, then the lattice points of $P$ are projected to lattice points in $P'$, and if $x\in P'$ is a lattice point within $P'$, its preimages are the lattice points in a line segment. Polynomial constraints can be introduced as soon as the lifting process reaches the highest coordinate present in the constraint.

In its standard form, the project-and-lift method is suitable for only minor cases of fusion rings. For satisfactory performance, the algorithm has been tailored to the special linear and polynomial constraint structure specific to fusion rings. Each linear equation is inhomogeneous with nonnegative coefficients and a positive right-hand side. We can refer to the set of coordinates that appear in the equation with positive coefficients as a "patch". These patches encompass the entire set of coordinates, and thus the linear equations, when restricted to the nonnegative orthant, delineate a polytope $P$. Solutions to a linear equation, confined to its patch, are ascertained using the project-and-lift technique previously outlined, and the lattice points in $P$ are derived by combining these local solutions along matching components. In essence, we begin with the solutions of one of the equations and progressively extend them patch by patch. The sequence in which patches are integrated into the extension process is pivotal. Normaliz includes options that allow alteration of the sequence, as detailed later on.

In the partition version (\S \ref{sub:DimPar}), the input file is only made of simplified dimension (linear) equations. Particularly for this case, it is critical to recognize a secondary, implicit constraint type: congruences extracted from the linear equations by taking successive residue classes modulo their coefficients. By default, each congruence involves only the coordinates pertaining to the patch of its originating equation. Nonetheless, since congruences only involve a subset of these coordinates, they frequently pertain to other patches or combinations thereof, potentially significantly limiting their number of solutions. Our current classification up to rank 13 would not have been achievable without meticulous utilization of the congruences.

When polynomial equations of degree two or higher are in play, Normaliz endeavors to determine an optimal patch extension order that allows these equations to be applied as early as feasible. Users can influence this order by either insisting on the "linear" input order or by directing Normaliz to employ "weights" that gauge the anticipated solution count for each patch and prioritize those with lower weight. Regardless of whether polynomial equations are present, users can request an order based on the applicability of congruences. This order can also be weight-dependent.

Some computations were executed on the high-performance cluster (HPC) at Osnabrück by early splitting of partial solutions into parts, which were then processed separately. Despite the rather basic approach of using a static subdivision without intercommunication between running instances of Normaliz, the HPC proved to be advantageous.

\subsection{Full Version}
\label{sub:full}

\emph{All the processes outlined in this subsection have been fully automated in Normaliz \cite{Norma} from version 3.10.2 , as outlined in \S \ref{sub:user}. Appendix H of its manual \cite{NorManual} specifically addresses the computation of the fusion rings for a specified type.}

Consider a fusion ring with the basis $\{b_1, \dots, b_r\}$. As described in \S\ref{sub:Fu}, for all indices $i,j$:
$$ b_i b_j = \sum_k N_{i,j}^k b_k,$$
and by applying $\FPdim$, we obtain the type $[d_1, \dots, d_r]$ and the corresponding \emph{dimension equations}:
$$ d_i d_j = \sum_k N_{i,j}^k d_k.$$
The objective is to resolve these $r^2$ linear positive Diophantine equations, where $(d_i)$ are specified and $(N_{i,j}^k)$ represent $r^3$ variables, using Normaliz. Now, we can decrease the variable count to roughly $(r-1)^3/6$ by invoking the Unit axiom ($N_{1,i}^j = N_{i,1}^j = \delta_{i,j}$) from the Definition \ref{def:fu} of fusion data, as well as the Frobenius reciprocity (Proposition \ref{prop:FrobRec}).

A critical factor in accelerating computation is the strategic use of associativity equations (non-linear)
$$\sum_s N_{i,j}^s N_{s,k}^t = \sum_s N_{j,k}^s N_{i,s}^t,$$
in the most effective manner possible during the solving process of the aforementioned linear Diophantine equations. While the optimal approach is not confirmed, the \emph{patching} method we employ is highly efficient (refer to \S\ref{sub:Norma} for further details).

In practice, for a given type $L=[d_1, d_2, \dots, d_r]$, utilize the function \texttt{TypeToNormaliz}, the SageMath code for which can be found at \cite{data}. This function generates input files (.in), one for each potential duality map $i \to i^*$. Place these files in a directory alongside the normaliz.exe and run\_normaliz.bat files available at \cite{data}, and execute run\_normaliz (note the existence of a more recent and faster Linux version used for our latest computations). This process yields output files (.out) containing all potential solutions (if any exist). The remaining task is to convert these solutions into fusion data, considering isomorphism. We demonstrate how this can be done with the following example. Take the type $L=[1,1,2]$ of the character ring of $S_3$. When \texttt{TypeToNormaliz} is applied, it generates the file [1,1,2][0,1,2].in with the content as follows:

\footnotesize
\begin{verbatim}
amb_space 4
inhom_equations 4
1 2 0 0 0 
0 1 2 0 -2 
0 1 2 0 -2 
0 0 1 2 -3 
LatticePoints
convert_equations
nonnegative
polynomial_equations 2
x[2]^2 - x[1]*x[3] + x[3]^2 - x[2]*x[4] - 1;
-x[2]^2 + x[1]*x[3] - x[3]^2 + x[2]*x[4] + 1;
\end{verbatim}
\normalsize

The upper part encodes the linear Diophantine equations, and the lower part lists the associativity equations. Following the execution of run\_normaliz, the file [1,1,2][0,1,2].out is produced, containing:

\footnotesize
\begin{verbatim}
1 lattice points in polytope (module generators) satisfying polynomial constraints:
 0 0 1 1 1
\end{verbatim}
\normalsize

Here, we encounter a single solution, but there could be multiple in general (as seen in the subsequent example). Next, remove the final '1' from each line of the solution and convert it into a list of lists:

\begin{verbatim}
sage: LL=[[0,0,1,1]]
\end{verbatim}

Collect the lists for the type and the duality map:

\begin{verbatim}
sage: L=[1,1,2]
sage: d=[0,1,2]
\end{verbatim}

Finally, to obtain all the fusion data up to isomorphism, apply the function \texttt{ListToFusion}:

\begin{verbatim}
sage: ListToFusion(LL,L,d)
[[[[1, 0, 0], [0, 1, 0], [0, 0, 1]],
  [[0, 1, 0], [1, 0, 0], [0, 0, 1]],
  [[0, 0, 1], [0, 0, 1], [1, 1, 1]]]]
\end{verbatim}

The result is the fusion data of $\ch(S_3)$, which can ultimately be formatted in TeX as follows:

 $$\begin{matrix}1&0&0 \\ 0&1&0 \\ 0&0&1\end{matrix} \ , \  \ \ \ \begin{matrix}0&1&0 \\ 1&0&0 \\ 0&0&1\end{matrix} \ , \ \ \ \ \begin{matrix}0&0&1 \\ 0&0&1 \\ 1&1&1 \end{matrix}$$

Now, applying the same procedure with the type $L=[1,5,5,5,6,7,7]$, we obtain four input files. Only the file corresponding to the trivial duality map yields solutions, with its output file containing:

\footnotesize
\begin{verbatim}
6 lattice points in polytope (module generators) satisfying polynomial constraints:
 1 0 1 0 1 1 1 0 1 1 1 0 1 1 1 1 1 1 1 1 1 1 0 0 1 1 1 1 1 1 1 1 1 1 1 1 1 0 1 1 1 1 1 1 1 1 1 1 1 2 1 2 0 3 1 2 1
 1 0 1 0 1 1 1 0 1 1 1 0 1 1 1 1 1 1 1 1 1 1 0 0 1 1 1 1 1 1 1 1 1 1 1 1 1 0 1 1 1 1 1 1 1 1 1 1 1 2 1 2 1 2 2 1 1
 1 0 1 0 1 1 1 0 1 1 1 0 1 1 1 1 1 1 1 1 1 1 0 0 1 1 1 1 1 1 1 1 1 1 1 1 1 0 1 1 1 1 1 1 1 1 1 1 1 2 1 2 2 1 3 0 1
 1 1 0 0 1 1 0 0 1 1 1 1 1 1 1 1 1 1 1 1 1 1 1 0 1 1 0 1 1 1 1 1 1 1 1 1 1 0 1 1 1 1 1 1 1 1 1 1 1 2 1 2 0 3 1 2 1
 1 1 0 0 1 1 0 0 1 1 1 1 1 1 1 1 1 1 1 1 1 1 1 0 1 1 0 1 1 1 1 1 1 1 1 1 1 0 1 1 1 1 1 1 1 1 1 1 1 2 1 2 1 2 2 1 1
 1 1 0 0 1 1 0 0 1 1 1 1 1 1 1 1 1 1 1 1 1 1 1 0 1 1 0 1 1 1 1 1 1 1 1 1 1 0 1 1 1 1 1 1 1 1 1 1 1 2 1 2 2 1 3 0 1
\end{verbatim}
\normalsize

All files can be accessed at \cite{data}. Ultimately, we acquire the following two sets of fusion data, up to isomorphism:

$$ \begin{smallmatrix}1&0&0&0&0&0&0 \\ 0&1&0&0&0&0&0 \\ 0&0&1&0&0&0&0 \\ 0&0&0&1&0&0&0 \\ 0&0&0&0&1&0&0 \\ 0&0&0&0&0&1&0 \\ 0&0&0&0&0&0&1\end{smallmatrix} ,   \ \begin{smallmatrix}0&1&0&0&0&0&0 \\ 1&1&0&1&0&1&1 \\ 0&0&1&0&1&1&1 \\ 0&1&0&0&1&1&1 \\ 0&0&1&1&1&1&1 \\ 0&1&1&1&1&1&1 \\ 0&1&1&1&1&1&1\end{smallmatrix} ,   \ \begin{smallmatrix}0&0&1&0&0&0&0 \\ 0&0&1&0&1&1&1 \\ 1&1&1&0&0&1&1 \\ 0&0&0&1&1&1&1 \\ 0&1&0&1&1&1&1 \\ 0&1&1&1&1&1&1 \\ 0&1&1&1&1&1&1\end{smallmatrix} ,   \ \begin{smallmatrix}0&0&0&1&0&0&0 \\ 0&1&0&0&1&1&1 \\ 0&0&0&1&1&1&1 \\ 1&0&1&1&0&1&1 \\ 0&1&1&0&1&1&1 \\ 0&1&1&1&1&1&1 \\ 0&1&1&1&1&1&1\end{smallmatrix} ,   \ \begin{smallmatrix}0&0&0&0&1&0&0 \\ 0&0&1&1&1&1&1 \\ 0&1&0&1&1&1&1 \\ 0&1&1&0&1&1&1 \\ 1&1&1&1&1&1&1 \\ 0&1&1&1&1&2&1 \\ 0&1&1&1&1&1&2\end{smallmatrix} ,   \ \begin{smallmatrix}0&0&0&0&0&1&0 \\ 0&1&1&1&1&1&1 \\ 0&1&1&1&1&1&1 \\ 0&1&1&1&1&1&1 \\ 0&1&1&1&1&2&1 \\ 1&1&1&1&2&0&3 \\ 0&1&1&1&1&3&1\end{smallmatrix} ,   \ \begin{smallmatrix}0&0&0&0&0&0&1 \\ 0&1&1&1&1&1&1 \\ 0&1&1&1&1&1&1 \\ 0&1&1&1&1&1&1 \\ 0&1&1&1&1&1&2 \\ 0&1&1&1&1&3&1 \\ 1&1&1&1&2&1&2\end{smallmatrix} $$

$$ \begin{smallmatrix}1&0&0&0&0&0&0 \\ 0&1&0&0&0&0&0 \\ 0&0&1&0&0&0&0 \\ 0&0&0&1&0&0&0 \\ 0&0&0&0&1&0&0 \\ 0&0&0&0&0&1&0 \\ 0&0&0&0&0&0&1\end{smallmatrix} ,   \ \begin{smallmatrix}0&1&0&0&0&0&0 \\ 1&1&0&1&0&1&1 \\ 0&0&1&0&1&1&1 \\ 0&1&0&0&1&1&1 \\ 0&0&1&1&1&1&1 \\ 0&1&1&1&1&1&1 \\ 0&1&1&1&1&1&1\end{smallmatrix} ,   \ \begin{smallmatrix}0&0&1&0&0&0&0 \\ 0&0&1&0&1&1&1 \\ 1&1&1&0&0&1&1 \\ 0&0&0&1&1&1&1 \\ 0&1&0&1&1&1&1 \\ 0&1&1&1&1&1&1 \\ 0&1&1&1&1&1&1\end{smallmatrix} ,   \ \begin{smallmatrix}0&0&0&1&0&0&0 \\ 0&1&0&0&1&1&1 \\ 0&0&0&1&1&1&1 \\ 1&0&1&1&0&1&1 \\ 0&1&1&0&1&1&1 \\ 0&1&1&1&1&1&1 \\ 0&1&1&1&1&1&1\end{smallmatrix} ,   \ \begin{smallmatrix}0&0&0&0&1&0&0 \\ 0&0&1&1&1&1&1 \\ 0&1&0&1&1&1&1 \\ 0&1&1&0&1&1&1 \\ 1&1&1&1&1&1&1 \\ 0&1&1&1&1&2&1 \\ 0&1&1&1&1&1&2\end{smallmatrix} ,   \ \begin{smallmatrix}0&0&0&0&0&1&0 \\ 0&1&1&1&1&1&1 \\ 0&1&1&1&1&1&1 \\ 0&1&1&1&1&1&1 \\ 0&1&1&1&1&2&1 \\ 1&1&1&1&2&1&2 \\ 0&1&1&1&1&2&2\end{smallmatrix} ,   \ \begin{smallmatrix}0&0&0&0&0&0&1 \\ 0&1&1&1&1&1&1 \\ 0&1&1&1&1&1&1 \\ 0&1&1&1&1&1&1 \\ 0&1&1&1&1&1&2 \\ 0&1&1&1&1&2&2 \\ 1&1&1&1&2&2&1\end{smallmatrix} $$

\subsection{Dimension Partition Version}
\label{sub:DimPar}

This method is applicable primarily for types denoted by $$T=[[1,m_1],[d_2,m_2], \dots, [d_s, m_s]],$$ where $s$ is not exceedingly large. This is because we can streamline the dimension equations by grouping elements that share the same dimension (i.e. dimension partition). However, the conversion of the associativity equations remains an open challenge. This version is intended to serve as an intermediary step to the full version for suitable types. Its utility lies in its ability to circumvent certain computational complexities by breaking symmetries. For the time being, it functions as a criterion; that is, if this version fails to yield a solution, the full version will similarly lack a solution.

We can reframe the type as $[1, d_{1,1}, \dots, d_{1,n_1}, d_{2,1}, \dots, d_{2,n_2}, \dots, d_{s,1}, d_{s,n_s}]$, where $d_{i,a}= d_i$, $d_1 = 1 = d_{0,1}$, and $n_i = m_i - \delta_{1,i}$. The dimension equations are then expressed as follows:
$$ d_{i,a} d_{j,b} = \sum_{k,c} N_{i,a,j,b}^{k,c} d_{k,c}.$$
Let us define $D_i:=\sum_{a=1}^{n_i} d_{i,a} = n_id_i$ and $M_{i,j}^k := \sum_{a,b,c} N_{i,a,j,b}^{k,c}$, which simplifies the equations to:
$$D_i D_j = \sum_{a,b}\sum_{k,c} N_{i,a,j,b}^{k,c} d_{k,c} = \sum_k (\sum_{a,b,c} N_{i,a,j,b}^{k,c}) d_{k}= \sum_k M_{i,j}^k d_{k}.$$
Consequently, we are tasked with solving the linear positive Diophantine equations:
$$n_i d_i n_j d_j = \sum_k M_{i,j}^k d_{k},$$
where $(d_i,n_i)$ are predetermined, and the variables $(M_{i,j}^k)$ are reduced to roughly $s^3/6$ by employing the dimension partition variant of the Unit axiom and Frobenius reciprocity. After grouping by dimension, the duality map becomes straightforward (that is, $i^* = i$). Note that we have not yet derived a satisfactory dimension partition version of the associativity axiom, but about the other ones:

\begin{lemma}
\label{lem:FuPart}
The following equalities hold:
\begin{itemize}
\item (Unit) $M_{i,0}^j = M_{0,i}^j =  \delta_{i,j}m_i$
\item (Dual) $M_{i,j}^0 = M_{j,i}^0 =  \delta_{i,j}m_i$
\item (Frobenius reciprocity) $M_{i,j}^k = M_{i,k}^j = M_{j,k}^{i} = M_{j, i}^{k} = M_{k, i}^{j} = M_{k, j}^{i}.$
\end{itemize}
\end{lemma}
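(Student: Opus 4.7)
The plan is to derive each of the three identity groups by unfolding the definition
$$M_{i,j}^k \;=\; \sum_{a,b,c} N_{(i,a),(j,b)}^{(k,c)}$$
and then applying the corresponding axiom of Definition \ref{def:fu} (or Proposition \ref{prop:FrobRec}) for the underlying fusion data. The only non-bookkeeping fact I will use is that the $*$-duality preserves Frobenius--Perron dimension, so it restricts to a bijection on each partition class $\{(i,a) : 1 \le a \le n_i\}$. Throughout I take the convention that index $0$ represents the sole unit element $\id$ of dimension $1$, with $n_0 = m_0 = 1$.

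For the Unit identities, class $0$ has only one element, so $M_{i,0}^j = \sum_{a,c} N_{(i,a),\id}^{(j,c)}$. By the Unit axiom this sum reduces to $\sum_{a,c} \delta_{(i,a),(j,c)}$; if $i \neq j$ it vanishes, and if $i=j$ the $n_i$ diagonal pairs each contribute $1$, yielding the claimed $m_i$. The relation $M_{0,i}^j = \delta_{i,j} m_i$ is symmetric. For the Dual identities, I apply the Dual axiom $N_{(i,a),(j,b)}^{\id} = \delta_{(j,b),(i,a)^*}$; since $*$ preserves the class, the sum again vanishes unless $i=j$, in which case the bijection $a \mapsto a^*$ on class $i$ produces exactly $n_i$ matches.

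For Frobenius reciprocity, the argument is essentially a change of summation variable. Applying $N_{(i,a),(j,b)}^{(k,c)} = N_{(i,a)^*,(k,c)}^{(j,b)}$ gives
$$M_{i,j}^k \;=\; \sum_{a,b,c} N_{(i,a)^*,(k,c)}^{(j,b)},$$
and reindexing by the bijection $a \mapsto a'$ with $(i,a)^* = (i,a')$ (inside class $i$) rewrites the right-hand side as $M_{i,k}^j$. Combining this with the second Frobenius reciprocity identity $N_{i,j}^k = N_{k,j^*}^i$ (treated the same way, via the bijection $b \mapsto b'$ on class $j$) generates the remaining permutations in the $6$-term chain by successive composition.

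The only thing that requires care is not a mathematical obstacle but the index convention: in the partitioned labels the role of ``$1$'' in Definition \ref{def:fu} is played by the pair $(0,1)$, and when $d_1 = 1$ the unit has already been extracted so that $n_1 = m_1 - 1$. Once one fixes the convention $n_0 = m_0 = 1$, the three formulas $\delta_{i,j} m_i$ hold uniformly (including at $i=j=0$), and the three axioms transfer verbatim from the original $(N_{i,j}^k)$ to the coarsened $(M_{i,j}^k)$.
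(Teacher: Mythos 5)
Your proof is correct and is exactly the ``straightforward'' verification the paper omits: unfold $M_{i,j}^k = \sum_{a,b,c} N_{(i,a),(j,b)}^{(k,c)}$, apply the Unit/Dual axioms of Definition \ref{def:fu} and Proposition \ref{prop:FrobRec}, and use that duality preserves $\FPdim$ and hence restricts to a bijection of each dimension class, so the reindexings are legitimate. The only point worth flagging is notational: the count you actually obtain is the class size $n_i$, which equals $m_i$ except for the invertible class when $d_1=1$ (where $n_1=m_1-1$), so the lemma's $m_i$ must be read as the multiplicity in the reindexed type --- your closing convention addresses this, though the claim that the formulas then ``hold uniformly'' is true only under that reading.
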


\begin{proof}
The proof is straightforward.
\end{proof}

In practice, one should follow the procedure outlined in \S \ref{sub:full} up to the generation of output files but replace the function \texttt{TypeToNormaliz} with \texttt{TypeToPreNormaliz}. For instance, consider the type $L=[1,6,12,12,15,15,15,20,20,30,30,60]$. 


\begin{remark}
While this version utilizes the dimension partition of the type, alternative versions could explore other pertinent partitions.
\end{remark}

\section{Half-Frobenius Integral Fusion Rings up to Rank 12}
\label{sec:Half}

This section focuses on classifying all half-Frobenius integral fusion rings up to rank $12$. Initially, we considered $1028$ types derived from Egyptian fractions with squared denominators, as discussed in \S \ref{sub:types}. From these, we identified $71$ types of fusion rings, but none were perfect, thus proving Theorem \ref{thm:perfect12}. Subsequently, we classified $10628$ fusion rings originating from these $71$ types, detailed in \S \ref{sub:ClassRings}. Among them, we identified $213$ noncommutative fusion rings. Ultimately, only $69$ fusion rings, from $27$ types, are commutative, cyclotomic, and self-transposable, as outlined in \S \ref{sub:CoCyS}.

\subsection{List of Possible Types} \label{sub:types}
Based solely on Egyptian fractions with squared denominators, we found $1028$ possible types up to rank $12$ (in fact, $9025$ ones up to rank $13$). The counting per rank is presented in Table~\ref{tab:types}.
\begin{table}[h]
\[
\begin{array}{c|cccccccccccc|c}
\text{Rank} & 1 & 2 & 3 & 4 & 5 & 6 & 7 & 8 & 9 & 10 & 11 & 12 & 13  \\ \hline
\# \text{Types} & 1 & 1 & 1 & 1 & 2 & 3 & 3 & 7 & 11 & 42 & 144 & 812 & 7997 \\ \hline
\# \text{Perfect Types} & 1 & 0 & 0 & 0 & 0 & 1 & 1 & 2 & 2 & 24 & 88 & 591 & 6517
\end{array}
\]
\caption{Number of possible types per rank}
\label{tab:types}
\end{table}

The comprehensive list of possible types up to rank $13$, available in \texttt{ListPossibleTypes.tar.xz} in \cite{data}, was computed using \texttt{all\_rep} from \S \ref{sec:Egy}. The ratio of perfect types (refer to \S \ref{sec:crit}) exhibits an increasing trend, e.g. $18\%$ for rank $9$, but $81\%$ for rank $13$. This leads us to question whether this ratio tends to $1$ as the rank goes to infinity.

\subsection{List of Fusion Rings} \label{sub:ClassRings}
All computational steps are documented in \texttt{InvestHFuptoR12.txt}, as referenced in \cite{data}. The initial step involved processing the $1016$ non-pointed types up to rank $12$ mentioned in \textup{\S}\,\ref{sub:types}. From these, merely $352$ types met the \texttt{TypeCriteria} outlined in \textup{\S}\,\ref{sec:crit}. Utilizing the partition version of our fusion ring solver, as discussed in \textup{\S}\,\ref{sub:DimPar} and limiting the processing time to ten seconds per type, further reduced the count to just $77$ types. Subsequently, excluding the type $[1,1,63,135,140,252,540,1260,1260,1890,1890,1890]$ with HPC brought the total down to $76$ types. The distribution of these types across ranks $5$ to $12$ is $1, 1, 1, 3, 5, 9, 17, 39$, respectively. Out of these, only $[1,2,2,3,4,4,15,15,20,30,30,30]$ is perfect. Following this, we employed the full version of our fusion ring solver, as outlined in \textup{\S}\,\ref{sub:full}, which necessitated specifying duality. This adjustment resulted in the expansion of our $76$ types into $801$ cases. We streamlined these cases using the \texttt{SingleLatticePoint} option, as elucidated in \textup{\S}\,\ref{sub:user}. After an initial round of processing, limited to $10$ seconds per case, $50$ cases remained unresolved, plus $312$ cases with a solution. The only perfect type is already excluded at this step, which proves Theorem~\ref{thm:perfect12}. Further rounds of processing, with limits set to $100$ seconds and then $1000$ seconds, resulted in $10$ then $3$ unresolved cases, respectively. Ultimately, we solved the last cases with HPC, concluding with solutions for $340$ cases across $59$ distinct non-pointed types, as listed below for each rank, arranged in lexicographic order:

\footnotesize
\begin{itemize}
\item Rank 5: [[1, 1, 1, 1, 2]],
\item Rank 6: [[1, 1, 1, 1, 2, 2]],
\item Rank 7: [[1, 1, 1, 1, 2, 2, 2]],
\item Rank 8: [[1, 1, 1, 1, 2, 2, 2, 2], [1, 1, 1, 1, 2, 2, 2, 4], [1, 1, 2, 2, 2, 2, 3, 3]],
\item Rank 9:
 [[1, 1, 1, 1, 2, 2, 2, 2, 2],
  [1, 1, 1, 1, 2, 2, 2, 4, 4],
  [1, 1, 1, 1, 4, 4, 6, 6, 6],
  [1, 1, 2, 2, 2, 2, 3, 3, 6]],
\item Rank 10: [[1, 1, 1, 1, 1, 1, 1, 1, 1, 3],
  [1, 1, 1, 1, 1, 1, 1, 1, 2, 2],
  [1, 1, 1, 1, 2, 2, 2, 2, 2, 2],
  [1, 1, 1, 1, 2, 2, 2, 4, 4, 4],
  [1, 1, 1, 1, 4, 4, 6, 6, 6, 12],
  [1, 1, 1, 2, 2, 2, 2, 2, 2, 3],
  [1, 1, 2, 2, 2, 2, 3, 3, 6, 6],
  [1, 1, 2, 3, 3, 4, 4, 4, 6, 6]],
\item Rank 11: [[1, 1, 1, 1, 1, 1, 1, 1, 1, 3, 3],
  [1, 1, 1, 1, 2, 2, 2, 2, 2, 2, 2],
  [1, 1, 1, 1, 1, 1, 1, 1, 2, 2, 4],
  [1, 1, 1, 1, 1, 1, 2, 2, 2, 3, 3],
  [1, 1, 1, 2, 2, 2, 2, 2, 2, 3, 6],
  [1, 1, 1, 1, 2, 2, 2, 4, 4, 4, 4],
  [1, 1, 1, 1, 2, 2, 2, 4, 4, 4, 8],
  [1, 1, 1, 1, 2, 4, 4, 4, 4, 6, 6],
  [1, 1, 2, 2, 2, 2, 3, 3, 6, 6, 6],
  [1, 1, 2, 3, 3, 4, 4, 4, 6, 6, 12],
  [1, 1, 1, 1, 2, 6, 6, 8, 12, 12, 12],
  [1, 1, 1, 1, 4, 4, 6, 6, 6, 12, 12],
  [1, 1, 1, 3, 4, 4, 4, 4, 4, 4, 6],
  [1, 1, 1, 1, 4, 4, 12, 12, 18, 18, 18]],
\item Rank 12: [[1, 1, 1, 1, 1, 1, 1, 1, 1, 3, 3, 3], [1, 1, 1, 1, 1, 1, 1, 1, 2, 2, 2, 2], [1, 1, 1, 1, 1, 1, 1, 1, 2, 2, 4, 4], [1, 1, 1, 1, 1, 1, 2, 2, 2, 3, 3, 6], [1, 1, 1, 1, 2, 2, 2, 2, 2, 2, 2, 2], [1, 1, 1, 1, 2, 2, 2, 2, 2, 2, 2, 4], [1, 1, 1, 1, 2, 2, 2, 2, 4, 6, 6, 6], [1, 1, 1, 1, 2, 2, 2, 4, 4, 4, 4, 4], [1, 1, 1, 1, 2, 2, 2, 4, 4, 4, 8, 8], [1, 1, 1, 1, 2, 2, 2, 8, 8, 12, 12, 12], [1, 1, 1, 1, 2, 4, 4, 4, 4, 6, 6, 12], [1, 1, 1, 1, 2, 6, 6, 8, 12, 12, 12, 24], [1, 1, 1, 1, 2, 8, 18, 18, 24, 36, 36, 36], [1, 1, 1, 1, 3, 3, 3, 3, 4, 4, 6, 6], [1, 1, 1, 1, 4, 4, 6, 6, 6, 12, 12, 12], [1, 1, 1, 1, 4, 4, 12, 12, 18, 18, 18, 36], [1, 1, 1, 2, 2, 2, 2, 2, 2, 3, 6, 6], [1, 1, 1, 2, 2, 2, 3, 4, 4, 4, 6, 6], [1, 1, 1, 3, 4, 4, 4, 4, 4, 4, 6, 12], [1, 1, 1, 3, 6, 8, 8, 8, 8, 8, 8, 12], [1, 1, 2, 2, 2, 2, 3, 3, 3, 3, 3, 3], [1, 1, 2, 2, 2, 2, 3, 3, 6, 6, 6, 6], [1, 1, 2, 2, 2, 2, 3, 3, 6, 6, 6, 12], [1, 1, 2, 2, 2, 2, 6, 6, 6, 6, 9, 9], [1, 1, 2, 3, 3, 4, 4, 4, 6, 6, 12, 12], [1, 1, 2, 3, 3, 6, 6, 8, 8, 8, 12, 12], [1, 1, 2, 6, 6, 6, 6, 10, 10, 10, 15, 15]].
\end{itemize}
\normalsize
As previously mentioned, none are perfect. It is noteworthy that the perfect integral modular fusion category, and therefore half-Frobenius, $\mathcal{Z}(\Rep(A_5))$ has an $\FPdim$ of $60^2=3600$, a rank of $22$, and a type of $[[1, 1], [3, 2], [4, 1], [5, 1], [12, 10], [15, 4], [20, 3]]$, calculated using \cite[Section 3]{NN} and GAP.

\begin{question} \label{Q:RankLess22}
Is there a perfect integral half-Frobenius fusion ring/category with a rank less than $22$?
\end{question}

Note that the perfect integral modular fusion category $\mathcal{Z}(\Rep(A_7))$, with $\FPdim$ $(7!/2)^2$, rank $74$, and type:
$$
\left[ [ 1, 1 ], [ 6, 1 ], [ 10, 2 ], [ 14, 2 ], [ 15, 1 ], [ 21, 1 ], [ 35, 1 ], [ 70, 9 ], [ 105, 4 ], [ 210, 20 ], [ 280, 9 ], [ 360, 14 ], [ 504, 5 ], [ 630, 4 ] \right],
$$
notably lacks any prime-power basic $\FPdim$.

\begin{question} \label{Q:less74}
Is there a perfect integral half-Frobenius fusion ring/category, without any prime-power basic $\FPdim$, that has a rank lower than $74$?
\end{question}

We ended with $71$ types (pointed included), and the computation with the \verb*|FusionData| option (retricted to the $340$ cases mentioned above, in the non-pointed case) provides $10628$ fusion rings, $213$ among them being noncommutative. The counting per rank is presented in Table~\ref{tab:halfFR}.
\begin{table}[h]
\[
\begin{array}{c|cccccccccccc}
\text{Rank} & 1 & 2 & 3 & 4 & 5 & 6 & 7 & 8 & 9 & 10 & 11 & 12  \\ \hline
\# \text{Types} & 1 & 1 & 1 & 1 & 2 & 2 & 2 & 4 & 5 & 9 & 15 & 28 \\ \hline
\# \text{Fusion Rings} & 1 & 1 & 1 & 2 & 3 & 6 & 9 & 23 & 105 & 158 & 1218  & 9101  \\ \hline
\# \text{Noncommutative} & 0 & 0 & 0 & 0 & 0 & 1 & 0 & 4 & 5 & 7 & 38 & 158
\end{array}
\]
\caption{Number of half-Frobenius fusion rings per rank}
\label{tab:halfFR}
\end{table}
The list of half-Frobenius integral fusion rings up to rank $12$ is available in \texttt{HalfFrobIntUpToRank12.tar.xz} in \cite{data}.

\subsection{Commutative, Cyclotomic and Self-Transposable} \label{sub:CoCyS}

Among the $10628$ half-Frobenius integral fusion rings up to rank $12$ discovered above, only $99.3\%$ are commutative cyclotomic and self-transposable (see \S \ref{sub:Smat}), i.e., $69$ fusion rings from $27$ types. The counting per rank is presented in Table~\ref{tab:ccsthalfFR}.
\begin{table}[h]
\[
\begin{array}{c|cccccccccccc}
\text{Rank} & 1 & 2 & 3 & 4 & 5 & 6 & 7 & 8 & 9 & 10 & 11 & 12 \\ \hline
 \#  \text{Types} & 1  &  1 &  1 &  1 & 1  &  1 &  2 &  2 & 2 & 4 &  5 & 6 \\ \hline
 \#  \text{Fusion Rings} & 1  &  1 &  1 &  2 &  1 & 1  &  3 & 7  & 4 & 11 &  13 & 24
\end{array}
\]
\caption{Number of commutative, cyclotomic and self-transposable half-Frobenius fusion rings per rank}
\label{tab:ccsthalfFR}
\end{table}
The types mentioned above, restricted to the non-pointed ones, are listed below:

\begin{itemize} {\small
\item Rank 7: [1, 1, 1, 1, 2, 2, 2], 
\item Rank 8: [1, 1, 2, 2, 2, 2, 3, 3], 
\item Rank 9: [1, 1, 1, 1, 4, 4, 6, 6, 6], 
\item Rank 10:[1, 1, 1, 1, 2, 2, 2, 4, 4, 4], [1, 1, 1, 2, 2, 2, 2, 2, 2, 3], [1, 1, 2, 3, 3, 4, 4, 4, 6, 6], 
\item Rank 11:[1, 1, 1, 1, 2, 2, 2, 2, 2, 2, 2], [1, 1, 1, 1, 2, 6, 6, 8, 12, 12, 12], [1, 1, 1, 3, 4, 4, 4, 4, 4, 4, 6], [1, 1, 1, 1, 4, 4, 12, 12, 18, 18, 18], 
\item Rank 12:[1, 1, 1, 1, 2, 8, 18, 18, 24, 36, 36, 36], [1, 1, 1, 3, 6, 8, 8, 8, 8, 8, 8, 12], [1, 1, 2, 2, 2, 2, 6, 6, 6, 6, 9, 9], [1, 1, 2, 3, 3, 6, 6, 8, 8, 8, 12, 12], [1, 1, 2, 6, 6, 6, 6, 10, 10, 10, 15, 15].}
\end{itemize}
The full list of non-pointed commutative, cyclotomic, self-transposable half-Frobenius integral fusion rings up to rank $12$ is available in \texttt{NonPtCycloHalfFrobSelfTransUptoRk12.tar.xz} in \cite{data}. They were classified utilizing the list from \S \ref{sub:ClassRings} in conjunction with the functions \texttt{NonCo}, \texttt{ExtendedCyclo}, \texttt{ModularCriterion} and \texttt{SelfTransposable} contained within the file \texttt{ModularData.sage}, also available at \cite{data}. 

\section{Advanced Results on Modular Fusion Categories} \label{sec:AdvMD}
From rank $13$ onwards, it becomes impractical to classify all half-Frobenius integral fusion rings using our current technology. Thus, the types were further restricted by additional properties coming from more advanced results on modular fusion categories, which involved the universal grading \S \ref{sub:univ}, congruence representations of the modular group \S \ref{sub:cong} and Galois action \S \ref{sub:gal}.

\subsection{Universal Grading} \label{sub:univ}
Let $G$ be a finite group. A \emph{$G$-grading} of a fusion ring $R$ is given by a partiton of its basis $B = \sqcup_{g \in G} B_g$ such that:
\begin{itemize}
\item For any $x \in B_g$ and any $y \in B_{g'}$, the basic components of $xy$ belong to $B_{gg'}$.
\item For any $x \in B_g$, $x^*$ is in $B_{g^{-1}}$.
\end{itemize}
A $G$-grading is called \emph{faithful} if $B_g$ is non-empty for all $g \in G$. Consequently, by \cite[Theorem 3.5.2]{EGNO}, $\FPdim(B_g):=\sum_{x \in B_g} \FPdim(x)^2$ is constant in $g$. The faithful grading with the largest group is called the \emph{universal grading}.
By \cite[Lemma 8.22.9]{EGNO}, the universal grading group of the Grothendieck ring of modular fusion category is $G=B_{pt}$, the group of basic element with $\FPdim = 1$ (see Corollary \ref{cor:pointed}). All these modular constraints lead to the following definition:

\begin{definition} \label{def:ModPart}
Let $t$ be a \emph{type}, i.e., a sorted list of integers starting with $1$. Let $r$ be the length of $t$. Let $p$ be the number of entries equal to $1$ in $t$. Let $D:=\sum_{d \in t} d^2$. A \emph{modular partition} of $t$ is a list $L$ of lists such that: 
\begin{itemize}
\item $t$ is the sorted concatenation of the lists in $L$,
\item $L$ is lexicographically sorted,
\item the lists in $L$ are sorted,
\item $L$ has $p$ elements,
\item $p$ divides $D$,
\item for all $l$ in $L$, then $\sum_{d \in l} d^2 = D/p$,
\end{itemize}
A solution for $t$ may be called a \emph{partitioned type}.
\end{definition}

The function classifying all the modular partitions of a given type is named \texttt{ModularPartitions} in \texttt{TypeCriteria.sage} in \cite{data}. Here are a few examples with $0$, $1$, $2$, and $3$ solutions:

\begin{verbatim}
sage: %attach TypeCriteria.sage
sage: L0=[1,1,1,1,2]
sage: ModularPartitions(L0)
[]
sage: L1=[1, 1, 1, 3, 12, 12, 30, 40, 40, 40, 40, 40, 40, 60]
sage: ModularPartitions(L1)
[[[1, 1, 1, 3, 12, 12, 30, 60], [40, 40, 40], [40, 40, 40]]]
sage: L2=[1, 1, 1, 1, 2, 2, 2, 2, 2, 2, 2, 4, 4, 4]
sage: ModularPartitions(L2)
[[[1, 1, 1, 1, 4], [2, 2, 2, 2, 2], [2, 4], [2, 4]],
 [[1, 1, 1, 1, 2, 2, 2, 2], [2, 4], [2, 4], [2, 4]]]
sage: L3=[1, 1, 2, 2, 2, 2, 3, 3, 3, 3, 3, 3, 6, 6]
sage: ModularPartitions(L3)
[[[1, 1, 2, 2, 2, 2, 3, 3, 6], [3, 3, 3, 3, 6]],
 [[1, 1, 2, 2, 2, 2, 3, 3, 3, 3, 3, 3], [6, 6]],
 [[1, 2, 2, 3, 3, 3, 6], [1, 2, 2, 3, 3, 3, 6]]]
\end{verbatim}

The classification of fusion rings constrained to such a grading, with grading group of order less than $5$, is a new Normaliz feature, see \cite[\S H.5.2]{NorManual}. 
The following theorem is essentially a reformulation of \cite[Proposition VI.2]{NRW23}.
\begin{theorem} \label{thm:ModCrit} Let $\mathcal{C}$ be an integral modular fusion category. Let $R$ be its Grothendieck ring with basis $B$. Let $G=B_{pt}$ be the universal grading group. Let $t_g := (\FPdim(x))_{x \in B_g}$. Let $\mathcal{C}_e$ be the fusion subcategory corresponding to $B_e$. 
Then:
\begin{itemize}
\item[(0)] If $\mathcal{C}_e$ is perfect then it is modular,
\item[(1)] If $B_{pt} \subset B_e$ and $t_e$ has an entry with odd multiplicity, then $\forall g \neq e$, every entry of $t_g$ has multiplicity $\ge 2$,
\item[(2)] If condition (1) is satisfied, $p:=|B_{pt}|$ is prime, and an entry $d$ in $t_e$ appears with multiplicity $m$, then $p$ divides $d$ or $m$,
\item[(3)] If condition (1) is satisfied, $p:=|B_{pt}|$ is prime, then there exists a modular fusion category $\mathcal{D}$ with $\FPdim = \FPdim(\mathcal{C})/p^2$ and type $t_e'$, where $t_e'$ is a reduction of $t_e$ that involves mapping $p$ identical entries $x, \dots, x$ to a single entry $x$, or alternatively, mapping one entry $x$ which is a multiple of $p$ to $p$ entries $x/p, \dots, x/p$.
\end{itemize}
\end{theorem}

\begin{remark} \label{rk:modtype}
Since the Grothendieck ring of a modular fusion category $\mathcal{C}$ is half-Frobenius, for each entry $x$ of its type, $x^2$ divides $\FPdim(\mathcal{C})$. However, regarding Theorem \ref{thm:ModCrit} (3), it is often the case that an entry $x$ in $t_e$ does not satisfy the condition that $x^2$ divides $\FPdim(t'_e) = \FPdim(t_e)/p$, whereas $t'_e$ must be half-Frobenius, therefore, this entry must be split into $p$ entries $x/p, \dots, x/p$ in $t_e'$. This helps to reduce the number of possible $t_e'$.
\end{remark}

The function \texttt{GradingCriteria} in \texttt{TypeCriteria.sage} automates the use of Theorem \ref{thm:ModCrit}, it also iterates over possible types of modularization and rechecks \verb*|Theorem8_7Check| (from Theorem \ref{thm:StrongPrime}) and \texttt{TypeCriteria}.

Here are four examples illustrating the exclusion criteria, each corresponding to a point of Theorem \ref{thm:ModCrit}.
\begin{itemize}
\item[(0)] In the partitioned type $[[1, 2, 2, 3, 3, 3, 3, 3], [1, 2, 2, 3, 6]]$, the neutral component is perfect, but we already know that there is no perfect integral modular fusion category of rank $8$,
\item[(1)] In $[[1, 1, 1, 1, 2, 2, 2, 3, 10, 10], [15], [15], [15]]$, the pointed part is in the neutral component $t_e$, and the entry $2$ has multiplicity three (odd) in $t_e$, but $15$ appears with multiplicity one in some non-neutral components.
\item[(2)] In $[[1, 1, 2, 2, 3, 3, 5, 6, 6, 10, 15], [15, 15]]$, the pointed part is in the neutral component $t_e$, and the entry $5$ has multiplicity one (odd) in $t_e$, but $5$ is not divisible by the prime $2=|B_{pt}|$.
\item[(3)] In $[[1, 1, 2, 12, 15, 15, 20, 20, 20], [30, 30]]$, the pointed part is in the neutral component $t_e$, and the entry $2$ has multiplicity one (odd) in $t_e$. Now, $\FPdim(t_e)/2 = 2^2 3^2 5^2$ which is not a multiple of $20^2$, so the entries $20$ must split for the modularization type $t_e' = [1, 1, 1, 6, 6, 10, 10, 10, 10, 10, 10, 15]$. However, $t_e'$ is excluded by \texttt{TypeCriteria}.
\end{itemize}

\subsection{Congruence Representation} \label{sub:cong}

This subsection reviews some applications of congruence representations of the modular group to modular fusion categories, leading to a proof of the folklore Theorem \ref{thm:folk}. Although a more concise proof is presented later in \S \ref{sub:gal}, the current exposition is meant to be informative and to serve for future research.

As discussed in \cite[Section 3]{NRWW}, a modular fusion category $\mathcal{C}$ is associated with modular data $(S,T)$, which gives a projective representation of $${\rm SL}(2,\mathbb{Z}) = \langle s,t \ | \ (st)^3 = s^2, s^4 = e \rangle.$$ This representation can be lifted to a usual (linear) representation $\rho$ by utilizing the linear characters (i.e. one-dimensional representations), forming a cyclic group of order $12$. This representation is $r$-dimensional—where $r$ represents the rank of $\mathcal{C}$—and is \emph{congruence}. This means it factors through ${\rm SL}(2,\mathbb{Z}/n\mathbb{Z})$, for some $n$ whose smallest one is called the \emph{level}. The level is determined as ${\rm ord}(\rho(t))$ and  satisfies $${\rm ord}(T) \ | \ {\rm ord}(\rho(t)) \ | \  12{\rm ord}(T).$$

A finite-dimensional congruence representation $\rho$ of level $n$ is completely reducible, hence it can be broken down into a direct sum of irreducible representations of ${\rm SL}(2,\mathbb{Z}/n\mathbb{Z})$. It's important to note that this includes only those irreducible representations that do not further factor through ${\rm SL}(2,\mathbb{Z}/d\mathbb{Z})$ for any proper divisor $d$ of $n$. Nevertheless, if $n = \prod_i p_i^{n_i}$ represents the prime factorization of $n$, then $\rho = \bigotimes_i \rho_i$ with each $\rho_i$ being a congruence representation of level $p_i^{n_i}$.

For deeper applications, note that \cite{NWW} proves that the finite-dimensional congruence representations are equivalent to \emph{symmetric} ones, which are classified in \cite{NWW2}.

The dimensions $ d $ of the irreducible finite-dimensional congruence representations at level $ n = p^a $ are listed in the table at the end of \cite{NW}.  
For $ a = 1 $, we have $ d \geq (n - 1)/2 $, which implies $ p = n \leq 2d + 1 $.  
For $ a \geq 2 $, we have  
\[
d \geq \frac{n}{2}(1 - \frac{1}{p^2}),
\]
which leads to  
\[
p^a = n \leq 2d + \frac{2d}{p^2 - 1}.
\]
If $ p > 2d + 1 $, then $2d < p - 1$, which implies  
\[
p < p^a < 2d + \frac{1}{p + 1} < 2d + 1,
\]
a contradiction. Therefore, $ p \leq 2d + 1 $ for all $ a $.  
Since the rank $ r $ of the modular fusion category is the sum of the dimensions $ d $ of these irreducible representations, we conclude that $ d \leq r $, and thus $ p \leq 2r + 1 $.

According to Cauchy’s theorem in \cite{BNRW}, the set $S $ of prime factors of $\operatorname{ord}(T) $ coincides with the set of prime factors of the global dimension norm $N $ of the modular fusion category of rank $r $. The prime numbers $p $ mentioned earlier (satisfying $p \leq 2r + 1 $) form the set $S' $, which consists of the prime factors of the level $n $ of the congruence representation.  

Since ${\rm ord}(T) \mid n \mid 12 {\rm ord}(T) $ and $12 = 2^2 \cdot 3 $, it follows that  
$
S \subseteq S' \subseteq S \cup \{2,3\}.
$
Therefore, for all prime factors $p \neq 2,3 $ of $N $, we have $p \leq 2r + 1 $. This inequality holds trivially for $p = 2,3 $.  

This concludes our first proof of Theorem \ref{thm:folk} without relying on \cite[Theorem II (iii)]{DLN}. 

\subsection{Galois Action} \label{sub:gal}

Let $(s,t)$ denote a \emph{normalized} modular data, and let $\mathbb{Q}_n$ be the cyclotomic field $\mathbb{Q}(\zeta_n)$, where $n = \text{ord}(t)$. In this subsection, $\sigma$ will refer to a Galois automorphism in $\text{Gal}(\mathbb{Q}_n/\mathbb{Q})$. For simplicity, we use the same symbol $\sigma$ to denote the induced permutation $X \mapsto \sigma(X)$ on the simple objects. It acts on $\dim$, $s$, and $t$ as follows: 
\begin{itemize}
\item[(1)] $\sigma(\dim(X)^2) = \frac{\sigma(\dim(\mathcal{C}))}{\dim(\mathcal{C})}\dim(\sigma(X))^2$, see \cite{EGNO},
\item[(2)] $\sigma(s_{X,Y}^2) =  s_{X,\sigma(Y)}^2$, see \cite{EGNO},
\item[(3)] $\sigma^2(t_X) = t_{\sigma(X)}$, see \cite[Theorem II (iii)]{DLN}.  
\end{itemize} 
See for example \cite[Section 2]{PSYZ} for an explicit normalization of the modular data. The following result is a straightforward consequence of (3) and \cite{BNRW}.

\begin{theorem} \label{thm:folk}
For any prime factor $p$ of the dimension norm of a modular fusion category with rank $r$, it holds that $p \leq 2r + 1$.
\end{theorem}
\begin{proof}
If $p=2,3$ then $p \le 2r+1$ trivially as $r \ge 1$. Let $p \neq 2,3$ be a prime factor of the global dimension norm. By Cauchy theorem in \cite{BNRW}, $p$ divides ${\rm ord(t)}$. So there must be a simple object $X$ such that $p$ divides the order of $t_X$, thus  the orbit of $(\sigma^2(t_X))$ has at least $(p-1)/2$ distinct elements, because the group of units in $\mathbb{Z}/p\mathbb{Z}$ is cyclic of order $p-1$, so it has an element $g$ with ${\rm ord}(g^2) = (p-1)/2$. So by (3), $r \ge (p-1)/2$, i.e., $p \le 2r+1$. 
\end{proof}

Here is an example of type of rank $r=13$ and $\FPdim = 2^4 3^2 5^2 7^2 19^2 37^2$ excluded Theorem \ref{thm:folk}:
$$[1, 777, 1036, 1295, 3990, 4218, 24605, 42180, 98420, 98420, 147630, 147630, 147630],$$
because $p=37 > 2r + 1 = 27$. 

Here is a stronger version in the integral case (shared by Eric Rowell and Andrew Schopieray):

\begin{theorem}  \label{thm:RoSc} For an integral modular fusion category, for every prime $p$ dividing the global ${\rm FPdim}$, there is a basic ${\rm FPdim}$ of multiplicity $m$ such that $p \le 2m+1$.
\end{theorem} 
 \begin{proof}
Consider the orbit $(\sigma^2(t_X))$ with at least $(p-1)/2$ distinct elements from above proof of Theorem \ref{thm:folk}. By (3), the orbit $(\sigma(X))$ has also at least $(p-1)/2$ distinct elements. By applying (1) on the (weakly) integral case, we get that $\sigma({\rm FPdim}(X)) = {\rm FPdim}(X)$. Thus all simple objects in the orbit $(\sigma(X))$ has the same ${\rm FPdim}$, so the multiplicity $m$ of this basic ${\rm FPdim}$ satisfies $m \ge (p-1)/2$, i.e., $p \le 2m+1$.
 \end{proof}

Here is an example of type of rank $13$ and $\FPdim = 2^4 3^6 5^2 7^2 17^2$ excluded by Theorem \ref{thm:RoSc} (but not Theorem \ref{thm:folk}):
$$[1, 238, 459, 540, 595, 918, 5355, 9180, 21420, 21420, 32130, 32130, 32130],$$
because $p=17 > 2m+1 = 7$, where $m=3$ is the largest multiplicty of a basic $\FPdim$.

\begin{remark}
As mentioned by an anonymous referee, a refined version of Theorem \ref{thm:RoSc} follows from \cite[Lemma 4.6]{BGNPRW}: in a modular fusion category, for every odd prime $p$ dividing $\ord(t)$, there exists a basic $\dim$ with Galois multiplicity (that is, the number of Galois-conjugate basic $\dims$) $m \ge \varphi(p^a)/2 \ge (p-1)/2$, where $a = v_p(\ord(t))$.
\end{remark}


Here is a stronger version of Theorem \ref{thm:RoSc}:  
\begin{theorem} \label{thm:StrongPrime}
For an integral modular fusion category, let $S$ be the set of odd prime factors of the global ${\rm FPdim}$. There is a partition $(S_i)$ of $S$, and multiplicities $(m_i)$ of \emph{some} distinct basic $\FPdims$ such that $$m_i \ge \frac{1}{2} \lcm_{p \in S_i}(p-1).$$  
\end{theorem} 
 \begin{proof}  
Let $S_X$ be the set of odd prime divisors of $\text{ord}(t_X)$. By Cauchy theorem in \cite{BNRW}, the union of  the sets $S_X$ over all the simple objects $X$ is exactly $S$. Let $\lambda$ be the \emph{Carmichael function}, i.e. the exponent of the multiplicative group of integers modulo $n$. It is well-known that if $n = \prod_i p_i^{n_i}$ is the prime factorization of $n$, then $\lambda(n) = \lcm_i \lambda(p_i^{n_i})$, whereas for $p_i$ odd then $\lambda(p_i^{n_i}) = \varphi(p_i^{n_i}) = (p_i-1)p_i^{n_i-1}$, where $\varphi$ is the Euler totient function. The orbit $(\sigma(X))$ has at least $\frac{1}{2}\lambda(\prod_{p \in S_X} p)$ distinct elements. Therefore, by above and the proof of Theorem \ref{thm:RoSc}, the multiplicity of $\FPdim(X)$ is at least $\frac{1}{2} \lcm_{p \in S_X}(p-1)$. The result follows.
 \end{proof}
 
Here is an example of type of rank $25$ and $\FPdim = 3^4 5^2 7^2 11^2 13^2$ excluded by Theorem \ref{thm:StrongPrime} (but not Theorem \ref{thm:RoSc}):
\[
[[1,1],[39,2],[231,2],[273,2],[1001,2],[1287,2],[3465,2],[4095,2],[9009,2],[15015,8]].
\]
First, note that it is not excluded by Theorem~\ref{thm:RoSc}, since 
$m = 8 \ge (p-1)/2$ for all $p \in \{3,5,7,11,13\}$. 
Next, let us explain why it is excluded by Theorem~\ref{thm:StrongPrime}. 
Let $S_i$ and $S_j$ be the components of the partition containing $11$ and $13$, respectively. 
The multiplicity $m = 8$ is the only one larger than $(p-1)/2$ for $p = 11, 13$. 
Hence $S_i = S_j$, but 
\(
\lcm(13-1, 11-1)/2 = 30 > 8,
\)
a contradiction.

The checking of Theorems \ref{thm:RoSc} and \ref{thm:StrongPrime} are automated by the function \verb*|Theorem8_5Check| and \verb*|Theorem8_7Check| in \texttt{TypeCriteria.sage} in \cite{data}.

\section{Stronger Arithmetic Constraints} \label{sec:SAC}

In this section, we will first prove some arithmetic constraints on the rank of the Drinfeld center of $\Rep(G)$, for any finite group $G$. Then, we will discuss how far they can be generalized to any integral modular fusion categories.

\subsection{Rank of $\mathcal{Z}(\Rep(G))$} \label{sub:ZG}

This subsection is inspired from discussions with Geoff Robinson and Dave Benson in \cite{PalMO, PalMO2}. The goal is to simplify the group theoretic way to express the rank of $\mathcal{Z}(\Rep(G))$, for any finite group $G$, and to provide some bounds involving the prime divisors of $|G|$. Consequently, this class of integral modular fusion categories satisfies Conjectures \ref{conj:pler} and \ref{conj:pler2}. We will adopt the following notations throughout our discussion:
\begin{itemize}
\item $Z(G)$ denotes the center of $G$,
\item $\Gamma_G$ represents a complete set of conjugacy class representatives,
\item $c_G$ is the rank of $\Rep(G)$, which equals the total number of conjugacy classes in $ G $, denoted by $ |\Gamma_G| $,
\item $r_G$ refers to the rank of $\mathcal{Z}(\operatorname{Rep}(G))$. 
\end{itemize}
According to \cite{CGR} or \cite[Section 3]{NN}, the rank \( r_G \) is determined by the number of irreducible characters in the centralizers of class representatives of $G$, as expressed precisely in Equation (\ref{equ:r1}) below.
\begin{lemma} \label{lem:rank}
Let $G$ be a finite group. Then 
\begin{align}
\label{equ:r1} r_G = \sum_{a \in \Gamma_G} c_{C_G(a)}.
\end{align}
\end{lemma}

The following result provides a simpler expression for the rank $r_G$. An anonymous referee pointed out that it is already stated in the expository paper \cite[Corollary 2.3]{Ma95}.

\begin{lemma} \label{lem:pair}
The rank $r_G$ is the number of conjugacy classes of ordered pairs of commuting elements of $G$.
\end{lemma}
\begin{proof}
By the equality in Lemma \ref{lem:rank}, it suffices to establish a bijection between the set $$ A_G := \{ c(a_1,a_2) \ | \ a_1,a_2 \in G \text{ with } a_1a_2 = a_2a_1\},$$
where $$c(a_1,a_2) := \{ (ga_1g^{-1},ga_2g^{-1}) \ | \ g \in G \},$$ and the set $$ B_G := \{(a, \beta) \ | \ a \in \Gamma_G \text{ and } \beta \text{ is a conjugacy class within } C_G(a)\}.$$
Given $c(a_1,a_2) \in A_G$, we associate the element $(a_1,\{ ha_2h^{-1} \ | \ h \in C_G(a_1)\}) \in B_G$. We merely need to confirm that if $a_1 = ga_1g^{-1}$, then $a_2$ and $ga_2g^{-1}$ are conjugates in $C_G(a_1)$, which is apparent since $a_1 = ga_1g^{-1}$ means that $g \in C_G(a_1)$.  
Given $(a,\beta) \in B_G$, we associate the element $c(a,b) \in A_G$, where $b \in \beta$. We only need to verify that if $b' \in \beta$, then $c(a,b') = c(a,b)$. Note that $b' = hbh^{-1}$, where $h \in C_G(a)$. Therefore, $c(a,b) = c(hah^{-1},hbh^{-1}) = c(a,b')$ because $hah^{-1} = a$, given that $h \in C_G(a)$.
\end{proof}

The following Proposition \ref{prop:conjcheck}, Theorem \ref{thm:r/2}, and Theorem \ref{thm:r/3} form a sequence of increasingly stronger results, each proven independently. We include all the proofs, rather than only the final one (see \S\ref{sec:geoff}), due to the significant increase in complexity. Recall that a prime $p$ divides $\FPdim(\mathcal{Z}(\operatorname{Rep}(G))) = |G|^2$ if and only if it divides $|G|$.
\begin{proposition} \label{prop:conjcheck}
Let $ G $ be a finite group. Let $ p $ be a prime divisor of $ |G| $. Then $ r_G \geq p $. 
\end{proposition}
\begin{proof}
This proof is due to Dave Benson. The ordered pair of commuting elements $(g,g^i)$ for $1 \le i \le {\rm ord}(g)$ are all in distinct conjugacy classes, so by Lemma \ref{lem:pair}, $r_G \ge {\rm ord}(g)$ for all $g$ in $G$. By Cauchy's theorem, there is an element of order $p$ dividing $|G|$. Thus, $r_G \ge p$.
\end{proof} 
The number of conjugacy classes of ordered pairs of commuting elements in the alternating group $A_n$ is $1, 1, 9, 14, 22, 44, 74$ for $n=1,\dots, 7$, respectively, see \cite{A371059}.

\begin{lemma} \label{lem:conjcheck2}
Let $G$ be a finite group. Then 
\begin{align}
\label{equ:r2} r_G & \geq |Z(G)|c_G + \sum_{g \in \Gamma_{G} \backslash Z(G) } {\rm ord(g)} , \\ 
\label{equ:r3} r_G & \geq \sum_{g \in \Gamma_{G}} {\rm ord(g)}.
\end{align}
\end{lemma}
\begin{proof}
In general, we have 
\begin{align} \label{equ:r4} 
c_{C_{G}(a)} \geq |Z(C_{G}(a))|  \geq {\rm ord}(a)
\end{align}
but if $a \in Z(G)$ then $C_G(a) = G$ and so $c_{C_G(a)} =c_G $. The inequalities (\ref{equ:r2}) and (\ref{equ:r3}) follow from (\ref{equ:r1}).
\end{proof}

\begin{theorem} \label{thm:r/2} 
Let $ G $ be a finite group. Let $ p $ be a prime divisor of $ |G| $. Then $ r_G \geq 2p $.
\end{theorem}
\begin{proof}
By Cauchy's theorem, there is an element $g$ of order $p$. If $p=2$ then $r_G \ge 3$ by (\ref{equ:r3}). So if $r_G \le 3$, then $r_G = 3$ and $|\Gamma_G| = 2$ by (\ref{equ:r3}), thus $G=C_2$, contradiction. So $r_G \ge 4 = 2p$. 

Therefore, we can assume that $p$ is odd. If $g^2$ is not in the conjugacy class of $g$ then, by (\ref{equ:r1}) and (\ref{equ:r4}), $$r_G \geq c_{C_{G}(g)} + c_{C_{G}(g^2)} \geq |Z(C_{G}(g)| + |Z(C_{G}(g^2)| \geq 2p,$$ because $x \in Z(C_{G}(x))$ and $\ord(g^2) = p$, as $p$ is odd. Thus, we can assume the existence of $h$ in $G$ such that $hgh^{-1} = g^2$, but then $h^ngh^{-n} = g^{2^n}$. Fermat's little theorem states that $2^{p-1} \equiv 1 \mod p$, and the multiplicative group $(\mathbb{Z}/p\mathbb{Z})^{\times}$ is cyclic of order $p-1$. So $h^{n}gh^{-n} = g$ for $n=p-1$, and $\{ h^ngh^{-n} \ | \ n=1,\dots,p-1\} = \langle g \rangle \setminus \{e\}$. Thus $p-1$ divides $\ord(h)$. But $g$ and $h$ have different order, so cannot be in the same conjugacy class, so by (\ref{equ:r3}), $r_G \ge \ord(e) + \ord(g) + \ord(h) \ge 1+p + p-1 = 2p$.
\end{proof}

The equality $r_G=2p$ is realized by $(G,p)=(C_2,2)$. Out of this example, we can get even better:

\begin{theorem} \label{thm:r/3} 
Let $ G $ be a finite group. Let $ p $ be a prime divisor of $ |G| $. Then $ r_G \geq 3p - 1 - \delta_{G, C_2} $.
\end{theorem}
\begin{proof}
If $ G $ is Abelian, then by (\ref{equ:r2}), we have $ r_G = |G|^2 $. This means that $ r_G < 3p - 1 $ implies $ G = C_2 $, where $ r_G = 4 = 3p - 2 $. The non-Abelian case, attributed to Geoff Robinson, is much more complex and too lengthy to include here. It is presented in \S\ref{sec:geoff}.
\end{proof}

The equality $r_G=3p-1$ is realized by $(G,p)=(S_3,3)$. We can expect even better in the non-solvable case:

\begin{conjecture} \label{conj:r/5}
For every non-solvable finite group $G$, and for every prime $p$ dividing the order of $G$, the inequality $r_G \geq 5p - 3$ holds.
\end{conjecture}

This has been verified for all non-solvable groups of order less than $1920$ and for all non-Abelian finite simple groups of order less than $10^8$. The equality is achieved for $G=A_5$ and ${\rm PSL}(2,7)$. Geoff Robinson has shown in \cite{PalMO2} that $r \ge 5p - 1$ for any non-solvable groups without a self-centralizing Sylow $p$-subgroup of order $p$, where $p$ the largest prime factor of $|G|$.

\subsection{Integral Modular Fusion Categories} \label{sub:ArCo}
Referring to the notations used previously, recall that Theorem \ref{thm:folk} asserts that \(p \leq 2r + 1\), which is enhanced for the integral case by Theorem \(\ref{thm:RoSc}\) to \(p \leq 2m + 1\), and also consult Theorem \ref{thm:StrongPrime} for a more robust version. Now, let's explore the extent to which the arithmetic constraints discussed in \S \ref{sub:ZG} can be applied to integral modular fusion categories. The subsequent conjecture aims to expand upon Proposition \ref{prop:conjcheck}.
  
\begin{conjecture} \label{conj:pler}
For any prime number $p$ that divides the global dimension of a integral modular fusion category with rank $r$, then $p \leq r$.
 \end{conjecture}
 
\begin{proposition} \label{prop:ConjR21}
The statement of Conjecture \ref{conj:pler} is true up to rank $21$.
\end{proposition} 
\begin{proof}
The proof largely relies on computer assistance. All computational steps are documented in \texttt{InvestUpToRank21par.txt} in \cite{data}. The script \texttt{all\_rep\_above\_r}, mentioned in \S \ref{sec:Egy}, focuses on types that comply with Theorem \ref{thm:StrongPrime} yet contradict Conjecture \ref{conj:pler}. Consequently, we identified exactly $187$ potential types up to rank $21$, as listed in \texttt{UpToRank21par.txt} in \cite{data}. Specifically, we found none up to rank $13$, and then $1, 4, 22, 2, 28, 0, 8, 122$ types at ranks $14$ through $21$, respectively. Of these, only $40$ types pass \texttt{TypeCriteria} from \S \ref{sec:crit}. Among these, only $28$ types pass \texttt{GradingCriteria} from \S \ref{sub:univ}. They are ultimately excluded by our fusion ring solver's partition version in \S \ref{sub:DimPar}, limited to one second per type. 
\end{proof}

The extension of Proposition \ref{prop:ConjR21} is currently underway, as detailed in \texttt{InvestAboveRank21par.txt}. At ranks $22$, $23$, and $24$, there are still $22$, $1$, and $16$ types respectively remaining to be considered; all of them being perfect, thus:

\begin{proposition} \label{prop:ConjR24Perf}
The statement of Conjecture \ref{conj:pler} is true up to rank $24$ in the non-perfect case.
\end{proposition} 

If Conjecture \ref{conj:pler} is true then it is optimal as demonstrated by the pointed examples of prime rank. Thus, we could expect better for the non-pointed case. The following conjecture generalizes Theorem \ref{thm:r/2}.

\begin{conjecture} \label{conj:pler2}
For any prime number $p$ that divides the global dimension of a non-pointed integral modular fusion category with rank $r$, then $p \leq r/2$.
\end{conjecture}

\begin{proposition} \label{prop:ConjR15}
The statement of Conjecture \ref{conj:pler2} is true up to rank $15$.
\end{proposition} 
\begin{proof}
The proof is mainly computer-assisted, but computationally a bit harder than for Proposition \ref{prop:ConjR21}. All computational steps are documented in \texttt{InvestUpToRank15pahr.txt} in \cite{data}. The specialized script \texttt{all\_rep\_above\_half\_r}, mentioned in \S \ref{sec:Egy}, provides $3094$ non-pointed types up to rank $15$ satisfying Theorem \ref{thm:StrongPrime} but contradicting Conjecture \ref{conj:pler2}, available in \texttt{UpToRank15pahr.txt}. We found none up to rank $8$, and then $6, 36, 250, 2266, 45, 491$ types at ranks $9$ through $15$, respectively. The irregularity between rank $13$ and rank $14$ comes from the fact that if $p>14/2 = 7$, then $p\ge 11$ because $9$ is not prime. Of these, only $1256$ types pass \texttt{TypeCriteria} from \S \ref{sec:crit}. Among these, only $900$ types pass \texttt{GradingCriteria} from \S \ref{sub:univ}. The use of our fusion ring solver's partition version from \S \ref{sub:DimPar}, limited to one second per type, reduced the rest to just $15$ types, then $12$, $5$, $4$ ones with $10$, $100$, $1000$ seconds per type respectively. Next, we applied our full fusion ring solver's partition version from \S \ref{sub:full}, requiring to specify the duality, so our $4$ types becames $40$ cases, we simplify with the commutativity option. After the first round, limited to $10$ seconds per case, there remain $8$ cases from $2$ types, ultimately all excluded in about two hours.
\end{proof}
 
The extension of Proposition \ref{prop:ConjR15} is currently in progress, as detailed in \texttt{InvestAboveRank15pahr.txt}. At ranks $16$ and $17$, there remain $132$ types (all perfect) and $7737$ types (of which only $19$ are not perfect), respectively, still to be considered. Thus:

\begin{proposition} \label{prop:ConjR16Perf}
The statement of Conjecture \ref{conj:pler} is true up to rank $16$ in the non-perfect case.
\end{proposition}  

With our current knowledge, it would not be reasonable to conjecture a generalization of Theorem \ref{thm:r/3} suggesting that $p \le (r+1)/3$ in the non-pointed case, or of Conjecture \ref{conj:r/5} suggesting that $p \le (r+3)/5$ in the non-solvable case. But it makes sense to explore in this direction. 

\begin{question} \label{q:NonSolvIntMFC}
Is it true that for any prime number $p$ dividing the global dimension of a non-solvable integral modular fusion category with rank $r$, the inequality $p \leq (r+3)/5$ holds?
\end{question}
The following proposition is related to Question \ref{Q:RankLess22}.
\begin{proposition} \label{prop:qr22}
An affirmative response to Question \ref{q:NonSolvIntMFC} would indicate that the minimum rank required for a non-trivial perfect integral modular fusion category is $22$.
\end{proposition}
\begin{proof}
By \cite[Proposition 4.5 (iv)]{ENO11}, a non-trivial perfect integral fusion category $\mathcal{C}$ is non-solvable. Thus by \cite[Theorem 1.6]{ENO11}, $\FPdim(\mathcal{C})$ must have at least three distinct prime factors, so there must be a prime factor $p \ge 5$. Assume that $\mathcal{C}$ is modular of rank $r$, then an affirmative response to Question \ref{q:NonSolvIntMFC} implies that $r \ge 5p-3 \ge 22$. Finally, the bound is realized by $\mathcal{Z}({\rm Rep}(A_5))$ having rank $22$.
\end{proof}
 
Here is what we can deduce in the perfect integral case:

\begin{corollary} \label{cor:pler}
For any prime number $p$ that divides the global dimension of a non-trivial perfect integral modular fusion category with rank $r$, then $p \leq 2r-5$.
\end{corollary}
\begin{proof}
By Theorem \ref{thm:small}, the number of distinct basic $\FPdim$s is at least $4$. Let $p$ be the biggest prime divisor of the global dimension, then by Theorem \ref{thm:RoSc}, there is a basic $\FPdim$ of multiplicity $m \ge (p-1)/2$. Thus $r \ge m+3 = (p+5)/2$. The result follows.
\end{proof}

\section{Proof Up To Rank $13$} \label{sec:R13}
This section proves Theorem \ref{thm:main} by leveraging the advanced results from Sections \ref{sec:AdvMD} and \ref{sec:SAC}, complemented by computational assistance. All computational steps are documented in \texttt{InvestUpToRank13.txt}, as referenced in \cite{data}. The initial step involves classifying all the possible types (4308 in total) constrained by Theorem \ref{thm:StrongPrime}, using \texttt{all\_rep\_th87} from \S \ref{sec:Egy}. Applying Proposition \ref{prop:ConjR15} reduces this number to just 1722 types. From these, merely $794$ types met the \texttt{TypeCriteria} outlined in \S \ref{sec:crit}. Among these, only $439$ types fulfilled the \texttt{GradingCriteria} presented in \S \ref{sub:univ}. Utilizing the partition version of our fusion ring solver, as discussed in \S \ref{sub:DimPar} and limiting the processing time to one second per type, further reduced the count to just $38$ types. Subsequently, excluding the type $[1,1,5,5,6,6,20,24,30,40,60,60,60]$ in under $1000$ seconds brought the total down to $37$ types. 

The distribution of these types across ranks $7$ to $13$ is as follows: $1, 1, 2, 2, 4, 7, 20$, respectively. Out of these, $27$ types are non-perfect, and $10$ are perfect, with each perfect type containing at least one prime-power entry, thereby providing direct proof for the simple case within Theorem \ref{thm:perfectcat13}. Following this, we employed the full version of our fusion ring solver, as outlined in \S \ref{sub:full}, which necessitated specifying duality. This adjustment resulted in the expansion of our $37$ types into $572$ cases. We streamlined these cases using the commutativity and \verb*|FusionData| options, as elucidated in \S \ref{sub:user}. After an initial round of processing, limited to $10$ seconds per case, $56$ cases remained unresolved, plus $81$ cases with solutions. Further rounds of processing, with limits set to $100$ seconds and then $1000$ seconds, resulted in $29+101$ and $23+106$ cases, respectively. Ultimately, leveraging HPC capabilities allowed for the resolution of all cases, concluding with solutions for $120$ cases across $26$ distinct types, as listed below:
 
\begin{itemize} {\small
 \item Rank $7$: [1, 1, 1, 1, 2, 2, 2],
 \item Rank $8$: [1, 1, 2, 2, 2, 2, 3, 3],
 \item Rank $9$: [1, 1, 1, 1, 4, 4, 6, 6, 6], [1, 1, 2, 2, 2, 2, 3, 3, 6],
 \item Rank $10$: [1, 1, 1, 2, 2, 2, 2, 2, 2, 3], [1, 1, 2, 3, 3, 4, 4, 4, 6, 6],
 \item Rank $11$: [1, 1, 1, 1, 2, 2, 2, 2, 2, 2, 2],
  [1, 1, 1, 1, 2, 4, 4, 4, 4, 6, 6],
  [1, 1, 1, 1, 4, 4, 12, 12, 18, 18, 18],
  [1, 1, 1, 3, 4, 4, 4, 4, 4, 4, 6],
 \item Rank $12$: [1, 1, 1, 1, 3, 3, 3, 3, 4, 4, 6, 6],
  [1, 1, 2, 2, 2, 2, 3, 3, 3, 3, 3, 3],
  [1, 1, 2, 2, 2, 2, 3, 3, 6, 6, 6, 6],
  [1, 1, 2, 2, 2, 2, 6, 6, 6, 6, 9, 9],
  [1, 1, 2, 3, 3, 6, 6, 8, 8, 8, 12, 12],
  [1, 1, 2, 6, 6, 6, 6, 10, 10, 10, 15, 15],
 \item Rank $13$: [1, 1, 1, 1, 2, 2, 4, 4, 4, 4, 4, 4, 6],
  [1, 1, 1, 1, 4, 4, 4, 4, 4, 4, 10, 10, 10],
  [1, 1, 1, 1, 4, 4, 6, 6, 6, 6, 6, 6, 6],
  [1, 1, 1, 1, 4, 4, 6, 12, 12, 12, 12, 18, 18],
  [1, 1, 1, 1, 4, 4, 12, 12, 36, 36, 54, 54, 54],
  [1, 1, 1, 3, 12, 12, 20, 20, 20, 20, 20, 20, 30],
  [1, 1, 2, 2, 2, 2, 3, 3, 3, 3, 3, 3, 6],
  [1, 1, 2, 2, 2, 2, 3, 3, 6, 6, 6, 6, 6],
  [1, 1, 2, 2, 2, 2, 6, 6, 6, 6, 9, 9, 18],
  [1, 1, 2, 3, 3, 24, 30, 30, 40, 40, 40, 60, 60].}
\end{itemize} 
  
Note that none are perfect, which concludes the proof of Theorem \ref{thm:perfectcat13} at this stage. The total number of commutative fusion rings obtained is $30084$. Subsequently, the function \texttt{ModularGrading} in \texttt{ModularData.sage} narrows this down to $8720$ fusion rings that have a modular grading (refer to Definition \ref{def:ModPart}), across $14$ types (a new feature in Normaliz allows for the direct classification of fusion rings with modular grading; see \cite[\S H.5.2]{NorManual}). However, only $42$ fusion rings (spanning $10$ types) are both cyclotomic and self-transposable (refer to \S \ref{sub:Smat}), distributed as $4, 0, 6, 6, 16, 10$ across ranks $8$ to $13$. These are detailed in \texttt{FusionRingsCCSTMG.sage}. The subsequent use of \texttt{MagicCriterion} from \S \ref{sub:Tmat} further refines the list to $13$ fusion rings (across $5$ types), distributed as $4, 0, 2, 4, 6, 0$. Notably, this stage confirms the absence of any non-pointed integral modular fusion categories of rank $13$. Lastly, applying \texttt{STmatrix} narrows it down to $3$ (non-pointed) types encompassing $5$ fusion rings and $19$ modular data (MD):
\begin{itemize}
\item $[1, 1, 2, 2, 2, 2, 3, 3]$ with $8$ MD,
\item $[1, 1, 1, 2, 2, 2, 2, 2, 2, 3]$ with $3$ MD,
\item $[1, 1, 1, 1, 2, 2, 2, 2, 2, 2, 2]$ with $8$ MD,
\end{itemize}
The pointed cases were addressed by using \texttt{STmatrixCo} on a list of fusion rings for the abelian groups of order up to $13$. We concluded with $19+64$ sets of modular data, originating from $5+18$ fusion rings representing $3+13$ types (non-pointed + pointed), up to rank $13$. This proves Theorem \ref{thm:main}. The full details are available in \cite[\S\ref{sec:modata}]{ABPPsupp} or \texttt{MDuptoRank13.txt} at \cite{data}.

\begin{remark}
When considering isomorphism classes, it is appropriate to adopt a \emph{normal form} by sorting the basic elements according to their $\FPdim$ \emph{and} topological spin, and limit basis permutations to those preserving both.
\end{remark}

The \verb*|STmatrix| function ran quickly on all remaining fusion rings, with the exception of two of the following type which took several hours to be excluded: $[1, 1, 2, 2, 2, 2, 6, 6, 6, 6, 9, 9, 18].$ 

\section{About Ranks $14$ and $15$} \label{sec:r14r15}
The current state of exploration at ranks $14$ and $15$ is documented in \texttt{InvestRank14.txt} and \texttt{InvestRank15.txt}, in \cite{data}.  Regarding rank $14$, the initial count was $29113$ (non-pointed) types, constrained by Theorem \ref{thm:StrongPrime}. At the time of writing, only $35$ types remain to be considered, $8$ of which are non-perfect. This leads to the following propositions:

\begin{proposition} \label{prop:NPrank14}
The type of a non-perfect non-pointed integral modular fusion category of rank $14$ belongs to the following list: 
\begin{itemize}
\item $[1, 1, 2, 3, 3, 24, 24, 42, 42, 56, 56, 56, 84, 84]$,
\item $[1, 1, 2, 3, 3, 24, 120, 150, 150, 200, 200, 200, 300, 300]$,
\item $[1, 1, 24, 24, 36, 40, 45, 45, 90, 90, 90, 180, 180, 180]$,
\item $[1, 1, 40, 84, 90, 126, 315, 315, 504, 630, 840, 1260, 1260, 1260]$,
\item $[1, 1, 45, 45, 90, 140, 168, 168, 630, 630, 840, 1260, 1260, 1260]$,
\item $[1, 1, 60, 60, 84, 140, 189, 189, 540, 1260, 1260, 1890, 1890, 1890]$,
\item $[1, 1, 90, 90, 90, 108, 140, 378, 945, 945, 1260, 1890, 1890, 1890]$,
\end{itemize}
together with 
\begin{itemize}
\item $[1, 1, 1, 3, 3, 3, 3, 3, 4, 4, 4, 4, 4, 4]$ 
\end{itemize}
which admits four commutative cyclotomic self-transposable fusion rings, leading to $6$ MD, all from $\mathcal{Z}(\VVec(A_4,\omega))$, already listed in the database of \cite{GrMo} at directory path \verb*|/Modular_Data/12/3/|. 
\end{proposition}
For the perfect case, you can consult the list of $27$ possible types in \texttt{InvestRank14.txt}.
\begin{corollary}
The set of prime divisors of the global $\FPdim$ of a non-pointed integral modular fusion category of rank $14$ must be:
\begin{itemize}
\item $\{2, 3, 5, 7\}$ in the perfect case,
\item $ \{2, 3, 5\}, \{2, 3, 5, 7\}$, or $\{2, 3, 7\}$ in the non-perfect case, except $\mathcal{Z}(\VVec(A_4,\omega))$ for which it is $\{2, 3\}$.
\end{itemize}
\end{corollary}

Regarding rank $15$, the initial count was $333423$ (non-pointed) types, also constrained by Theorem \ref{thm:StrongPrime}. At the time of writing, only $9027$ types remain to be considered, of which $399$ are non-perfect. The list of possible types are in \texttt{InvestRank15.txt}.
\begin{corollary}
The set of prime divisors of the global $\FPdim$ of a non-pointed integral modular fusion category of rank $15$ must be:
\begin{itemize}
\item $\{2, 3, 5\}, \{2, 3, 5, 7\}$, or $\{2, 3, 7\}$ in the perfect case,
\item $\{2, 3\}, \{2, 3, 5\}, \{2, 3, 5, 7\}$, or $\{2, 3, 7\}$ in the non-perfect case.
\end{itemize}
\end{corollary}

\begin{remark} \label{ENOcrit}
In the subsequent paper \cite{DP25}, a generalization of an argument of Etingof--Nikshych--Ostrik yields a powerful necessary criterion for integral modular categorification. Using this criterion, we complete the classification of categorifiable integral modular data up to rank~$14$, and up to rank~$25$ in the odd-dimensional case. We also demonstrate the ongoing effectiveness of this approach by reducing the number of unresolved types at rank~$15$ from $9027$ to just $2481$.
\end{remark}

\section{The Odd-Dimensional Case}
\label{sec:MNSD}

For an overview of the current state of knowledge on odd-dimensional modular fusion categories, we refer the reader to \cite{CzPl,CzPl2}. A foundational result in this area establishes that an odd-dimensional modular fusion category $\mathcal{C}$ is equivalent to being maximally non self-dual (MNSD), meaning that its only self-dual simple object is the unit object. Let $(d_i)_{i \in I}$ represent the basic $\FPdim$s of $\mathcal{C}$. Since $d_i^2$ is a divisor of the odd $\FPdim(\mathcal{C})$, each $d_i$ must be odd. Furthermore, the equation $\sum_{i \in I} d_i^2 = \FPdim(\mathcal{C})$ implies that the rank $r=|I|$ must also be odd. This reduces our investigation to Egyptian fractions of the form $q = \sum_{i = 1}^r \frac{1}{s_i^2}$, where $q, r, s_i \in \mathbb{Z}_{\ge 1}$, $s_1\ge \cdots \ge s_r \ge 1$, and both $r$ and $s_i$ are odd. Additionally, $s_i$ divides $s_1$ for all $i$, and $s_{2k} = s_{2k+1}$. This yields the expression
$$ q = \frac{1}{s_1^2} + \sum_{k=1}^{(r-1)/2}  \frac{2}{s_{2k}^2}. $$
Since each $s_i$ is odd, we have $s_i^2 \equiv 1 \pmod{8}$, which implies $q \equiv r \pmod{8}$ and that $q$ is odd as well. Utilizing a similar technique as in \S \ref{sec:Egy}, we can assume $s_i > 1$ (hence $s_i \ge 3$), by completing the classification with additional $1$s if necessary. Consequently, we can assume $q \le r/9$. For $r < 27$, this allows us to deduce that $q=1$, and therefore $r \equiv 1 \pmod{8}$, which narrows the possibilities for $r$ to $1, 9, 17, 25$ (up to completing by $1$s).

\begin{remark}
This strategy can be extended. For instance, by adding eighteen $3$s to complete the classification, we may assume that $s_i \ge 5$ for $i \le r-16$, which leads to $q \le 16/9 + (r-16)/25$. If $r < 47$ (which becomes $51$ because $q \equiv r \pmod{8}$), we can assume that $q=1$. However, this extended strategy will not be applied in this paper.
\end{remark}

For all $r<25$, we compiled the following list of all possible non-pointed types (using \texttt{all\_rep\_MNSD} from \S \ref{sec:Egy}):
\begin{itemize}
\item[•] [[1, 9], [3, 8], [9, 2a]],
\item[•] [[1, 7], [3, 2], [5, 8], [15, 2a]],
\item[•] [[1, 3], [3, 8], [5, 6], [15, 2a]],
\item[•] [[1, 1], [3, 2], [7, 2], [9, 4], [21, 8], [63, 2a]],
\item[•] [[1, 1], [9, 4], [25, 2], [45, 2], [75, 8], [225, 2a]],
\end{itemize}
where $a \ge 0$ represents the number of $1$s added for completion. It is noteworthy that these ranks are $17+2a$, which corroborates a result from \cite{CzPl} stating that any odd-dimensional modular fusion category with rank less than $17$ is pointed. Further, \cite[Remark 4.3]{CzPl} states that any perfect odd-dimensional modular fusion category is a Deligne product of simple ones. From the preceding analysis, a non-pointed one must have a rank of at least $17$, meaning a perfect non-simple one must have a rank of at least $289$ ($=17^2$). Therefore, a perfect one with a rank less than $289$ must be simple, and cannot have a prime-power basic $\FPdim$, as shown in \cite[Corollary 6.16]{nik}. Consequently, the previously mentioned perfect types are excluded. It follows that:

\begin{theorem} \label{thm:625}
Every perfect odd-dimensional modular fusion category of rank less than $25$ is trivial, and so everyone of rank less than $625$ ($=25^2$) is simple.
\end{theorem}
\textbf{Proof of Theorem \ref{thm:MNSD25}} \begin{proof} By Theorem \ref{thm:625}, there remain to address the non-perfect types above. Their rank is always $17 + 2a < 25$, which implies $a < 4$. As outlined in \S \ref{sub:univ}, the modular grading results in a partition indexed by the pointed part, with each component having the same $\FPdim$, in particular the $\FPdim$ of the pointed part divides the global $\FPdim$.

\begin{itemize}
\item First, let's examine the type $[[1, 9], [3, 8], [9, 2a]]$. The $\FPdim$ for this type is $81(1 + 2a)$. Consequently, each partition component must have $\FPdim = 9(1 + 2a)$. If $a > 0$, a component with $9$ must have $\FPdim \ge 81$. This leads to $81 \le 9(1 + 2a)$, resulting in $a \ge 4$, contradiction. Thus, $a = 0$. The modular partition must be $$[[1,1,1,1,1,1,1,1,1],[3],[3],[3],[3],[3],[3],[3],[3]],$$ which contradicts Theorem \ref{thm:ModCrit} (1).
\item Regarding the second type $[[1, 7], [3, 2], [5, 8], [15, 2a]]$, the $\FPdim$ of the pointed part equaling $7$ is not a divisor of the global $\FPdim = 225(1 + 2a)$, for $0 \le a < 4$, except $a=3$. Therefore, each partition component must have $\FPdim = 225(1 + 2 \times 3)/7 = 225 = 15^2$, so the modular partition must be $$[[1,1,1,1,1,1,1,3,3,5,5,5,5,5,5,5,5],[15],[15],[15],[15],[15],[15]],$$ which contradicts Theorem \ref{thm:ModCrit} (1).

\item Lastly, consider the third type $[[1, 3], [3, 8], [5, 6], [15, 2a]]$. If $a=1$ then the modular partition must be $$[[1,1,1,3,3,3,3,3,3,3,3,5,5,5,5,5,5],[15],[15]],$$ which contradicts Theorem \ref{thm:ModCrit} (1). If $a=2$, then there is no modular partition. If $a=3$, then the global $\FPdim = 3^25^2 7$, but its powerless prime factor $p=7$ does not divide $\FPdim(\mathcal{C}_{pt}) = 3$, contradicting \cite[Theorem 1.17]{BuPa23}. Alternatively, see Remark \ref{rk:alter}. Hence, $a = 0$. Following this, applying the fusion ring solver outlined in \S \ref{sub:full} to the type $[[1, 3], [3, 8], [5, 6]]$ yields two fusion rings. Applying \texttt{STmatrix2} to these fusion rings provides $3$ modular data, detailed in \cite[\S\ref{sec:OddMD}]{ABPPsupp}. \qedhere
\end{itemize}
\end{proof}

\begin{remark} \label{rk:alter}
Here is an alternative proof for the case $a=3$ in the last item above. The neutral component $\mathcal{C}_e$ must be of type $[[1, 3], [3, 8], [15, 2]]$ and $\FPdim = 3^1 5^2 7$, so its modularization $\mathcal{M}$ would have $\FPdim = 5^2 7$, hence cannot have basic $\FPdim = 3$, thus must be of type $[[1,25],[5,6]]$, but each component of the modular partition (for $\mathcal{M}$) would have $\FPdim = 7$, and so $5^2 \le 7$, contradiction.
\end{remark}  

\begin{definition} \label{def:anofree}
A modular data is called \emph{anomaly-free} if its Gauss sums are equal ($p_+ = p_-$), see Definition \ref{def:MD}.
\end{definition}

\begin{lemma} \label{lem:anofree}
A modular data is anomaly-free if and only if $p_+= \pm \sqrt{\dim}$, if and only if the central charge $c \in \{0,4\}$. 
\end{lemma}
\begin{proof}
Recall from Definition \ref{def:MD} that $p_\pm := \sum_{i=1}^{r} d_i^2 (\theta_i)^{\pm 1}$. Thus $p_+$ and $p_-$ are complex-conjugate. So anomaly-free is equivalent to $p_+$ real. Now, $p_+=\sqrt{\dim}\zeta_8^c$, thus it is real if and only if $\zeta_8^c = \pm 1$, if and only if $c=0$ or $4$.
\end{proof}

\begin{remark} \label{rk:gaps}
As highlighted in Remark \ref{rk:IntroGaps}, gaps have been identified in the literature:
\begin{enumerate}
\item In \cite[Theorem 4.2, proof of Case (viii) $\FPdim(\mathcal{C}_{pt}) = p$]{BGHKNNPR}, on page 727, the assertion that the anomaly-freeness (as defined in their reference \cite{DLN}, see Definition \ref{def:anofree}) necessarily leads to $p_+ = pq$ is incorrect, see Lemma \ref{lem:anofree}, it may also be $-pq$, as for the MD described in \cite[\S\ref{sec:OddMD}]{ABPPsupp}, thus also allowing $p | (q+1)$.
\item In \cite[Theorem 6.3 (b), proof of Case $|\mathcal{G}(\mathcal{C})|=3$]{CzPl}, on page 1936, the deduction ``Hence $l \le 24$'' in the seventh last line is accurate, except when $c_{X_1} = 1$, which permits $l=5$, thereby accommodating the type $[[1, 3], [3, 8], [5, 6]]$. 
\end{enumerate}
\end{remark}
Following our paper, \cite{CzPl} was corrected on arXiv, and \cite{GPR24} introduces modular categorifications for these new MD.
\begin{remark} \label{rk:WithSeb}
Regarding the modular categorifications $\mathcal{C}$ of these new MD in \cite[\S\ref{sec:OddMD}]{ABPPsupp}, a discussion with Sebastian Burciu revealed that the Grothendieck ring of the braided adjoint fusion subcategory $\mathcal{D} = \mathcal{C}_{ad}$, which is of rank $11$, $\FPdim$ $75$, type $[[1,3],[3,8]]$, and basis $\{b_g\}_{g \in C_3} \cup \{x_i,x_i^*\}_{i \in \{1,2,3,4\}}$, is equal to $\ch(G)$, where $G = C_5^2\rtimes C_3$ is the unique non-Abelian finite group of order $75$. In fact, an application of \S \ref{sub:full} shows a unique MNSD cyclotomic fusion ring of this type.  Furthermore, $\mathcal{D}$ is not symmetric, as indicated by the $S$-matrices mentioned in \cite[\S\ref{sec:OddMD}]{ABPPsupp}. In fact, the M\"uger center $\mathcal{Z}_2(\mathcal{D})$ is $\mathcal{C}_{pt}$, pointed of rank $3$. Therefore, $\mathcal{D}$ can be $\Rep(G)$, albeit with an unusual braiding (see \cite{Davy}), or, more broadly, a Jordan-Larson category \cite{JoLa} with an $\FPdim$ of $3 \times 5^2$. Finally, according to the $S$-matrices again, $\mathcal{C}$ is a minimal modular extension of $\mathcal{D}$, see \cite{Mu03} and \cite[\S 1.1]{JFR24}.
\end{remark}

There are non-pointed and non-perfect odd-dimensional modular fusion categories of rank $25$, exemplified by $\mathcal{Z}(\VVec_{C_7 \rtimes C_3}^{\omega})$. Furthermore, \cite{CzPl2} demonstrates that, up to equivalence, no additional examples exist. Consequently, our attention must now turn to the examination of the perfect case.
\begin{proposition} \label{prop:MNSDper25}
A perfect odd-dimensional modular fusion category of rank $25$, if any, must have one of the following $3$ types:
\begin{enumerate}[label=\arabic*.,leftmargin=*]
\item $[[1,1],[75,2],[91,4],[175,2],[585,2],[975,2],[2275,2],[4095,2],[6825,8]]$,
\item $[[1,1],[75,2],[91,4],[175,2],[975,2],[2275,2],[2925,4],[6825,8]]$,
\item $[[1,1],[135,4],[165,2],[189,2],[315,2],[385,2],[1155,2],[2079,2],[3465,8]]$.
\end{enumerate}
\end{proposition}
\begin{proof}
We are reduced to the simple case by Theorem \ref{thm:625}. Moreover, $q=1$; otherwise, the type would be a completion (via the process of adding 1s to the Egyptian fraction with squared denominators) of a perfect type with a rank less than $25$. However, as previously mentioned, all such types possess some entries that are prime-power, so they do not conform to the simple case as outlined in \cite[Corollary 6.16]{nik}. The investigation is documented in \texttt{InvestMNSDPerfectR25.txt}. We started with $91$ possible types using the aforementioned method combined with \cite[Corollary 6.16]{nik}. Among them, only $15$ types pass \verb*|Theorem8_7Check| (i.e., Theorem \ref{thm:StrongPrime}), and then $7$ pass \texttt{TypesCriteria}. Using our fusion ring solver in \S \ref{sub:DimPar} and HPC, we reduced the list to the $3$ mentioned types.
\end{proof}

These three types are resolved in a subsequent paper; see Remark~\ref{ENOcrit}.

\begin{appendices}
\section{Proof by Geoff Robinson} \label{sec:geoff}

This proof, originally due to Geoff Robinson \cite{PalMO2} and slightly simplified here, completes the argument for Theorem \ref{thm:r/3} by establishing that $ r_G \geq 3p - 1 $ when $ G $ is non-Abelian. Most of the steps are straightforward, though the final part relies on a deeper result from modular representation theory, by R. Brauer \cite{Brauer}.

\begin{enumerate}
    \item\label{step:largest_prime} We may assume that $p$ is the largest prime dividing $|G|$; if not, we replace $p$ with the largest one.
    
    \item\label{step:conjugacy_bound} The number of conjugacy classes of the centralizer $C_G(a)$ is at least the order of its center $Z(C_G(a))$. Furthermore, since $Z(C_G(a))$ contains both $Z(G)$ and $\langle a \rangle$, we have:
    \[
    |Z(C_G(a))| \geq \operatorname{lcm}(\operatorname{ord}(a), |Z(G)|).
    \]
From this point onward, assume that $ r_G \leq 3p - 2 $. Under this condition, there can be at most two non-conjugate elements of order $ p $, and no element $ g $ of order $ kp $ for any integer $ k > 1 $. The existence of such an element would imply $$ r_G \geq \operatorname{ord}(g) + \operatorname{ord}(g^k) = kp + p  = (k+1) p \ge 3p > 3p - 2,$$ since two elements with different order, like $g$ and $g^k$, cannot be conjugate. This contradicts the assumption.

    \item\label{step:centralizer_pgroup}  Let $x \in G$ be an element of order $p$. Suppose $C_G(x)$ is not a $p$-group. Then it contains an element $y$ of prime order $q \neq p$. Since $x$ and $y$ commute and their orders are coprime, the element $xy$ has order $pq$, contradicting the bound in Step~\ref{step:conjugacy_bound} since $q>1$. Therefore, $C_G(x)$ must be a $p$-group.

    \item \label{step:lemma} 
        \textbf{Lemma}: Let $H$ be a non-cyclic $p$-group. Then $H$ contains at least $3p - 2$ conjugacy classes.

\begin{proof}
        If $H$ is abelian, then it must have order $p^n$ with $n \ge 2$ (since it is non-cyclic), and the number of conjugacy classes equals $|H| \geq p^2 \geq 3p - 2$.

        Recall that a group of order $p^2$ must be Abelian. Suppose $H$ is non-abelian. Then $|H| = p^n$ with $n \geq 3$. Let $g_1, \dots, g_m$ be representatives of the distinct conjugacy classes in $H$. By the class equation,
        \[
        |H| = \sum_{i=1}^m |H : C_H(g_i)|.
        \]
        One term in the sum is $1$, corresponding to the neutral element. Since all the indices $|H : C_H(g_i)|$ are powers of $p$, and the sum is congruent to $1 \bmod p$, there are at least $p$ terms equal to $1$.

        The remaining $m - p$ terms sum to $|H| - p$. Modulo $p^2$, we conclude that at least $p - 1$ of these terms must be at most $p$. Therefore, we have at least $2p - 1$ terms each at most $p$, and their total contribution is at most:
        \[
        p + (p - 1)p = p^2.
        \]
        Since $|H| = p^n$ with $n \geq 3$, modulo $p^3$, we get that at least $p - 1$ additional terms must be at most $p^2$. Hence, altogether, there are at least $3p - 2$ conjugacy classes.
    \end{proof}

    \item\label{step:cyclic_centralizer} If $C_G(x)$ is not cyclic, then by Step~\ref{step:lemma}, it has at least $3p - 2$ conjugacy classes. Since $C_G(e) = G$ has at least $2$ conjugacy classes, we obtain $r_G \geq 3p$, which contradicts the assumption. Hence, $C_G(x) = \langle x \rangle$ for every $x \in G$ of order $p$.

    \item\label{step:sylow_order} Let $H$ be a Sylow $p$-subgroup of $G$. If $|H| = p^n$ with $n>1$, by the class equation modulo $p$, $|Z(H)| = p^m$ with $m \ge 1$, so $H$ has a central element $x$ of order $p$, and $C_G(x)$ contains $H$, contradicting Step~\ref{step:cyclic_centralizer}. 

    \item\label{step:odd_prime} By Step~\ref{step:sylow_order}, $G$ has a cyclic Sylow $p$-subgroup of order $p$. If $p = 2$, then by Step~\ref{step:largest_prime}, $G \cong C_2$, but $G$ is non-Abelian. Therefore, $p$ is odd, and the Sylow $ p $-subgroup of $ G $ is  $H = \langle x \rangle = C_G(x) $, of order $p$. 
   
\item \label{step:normal_case}
Let $ G_1 = \{ g \in G : gH = Hg \} $ be the normalizer of $ H $ in $ G $. Now, $ H $ is a normal (and hence unique) Sylow $ p $-subgroup of $ G_1 $, as all Sylow subgroups are conjugate. Let $ |G_1| = ep $, with $e$ and $p$ coprime. Brauer’s result \cite{Brauer} establishes that the entire group $ G $ — not merely $ G_1 $ — has at least $ e + \frac{p - 1}{e} $ conjugacy classes. Moreover, two distinct elements $ x_1, x_2 \in H $ are conjugate in $ G $ if and only if they are conjugate in $ G_1 $. This is because any $ g \in G $ satisfying $ gx_1g^{-1} = x_2 $ must, by definition, lie in $ G_1 $.

There is an action of $ G_1 $ on $ H $ by conjugation, yielding a homomorphism  
\[ \Phi : G_1 \to \Aut(H) \cong C_{p - 1}. \]  
The kernel of this homomorphism is $ H $, since $ H = C_G(x) $, making $ H $ self-centralizing.

Each automorphism $ \theta \in \Aut(H) $ is determined uniquely by the image of $ x $, so the number of conjugates of $ x $ in $H$ is exactly $ |\Phi(G_1)| $, and the same holds for any generator of $ H $. Thus, the index of $ \Phi(G_1) $ in $ \Aut(H) $ is 1 if all order $p$ elements in $ G $ are conjugate; otherwise, it is $2$. This follows from Step \ref{step:conjugacy_bound}, which shows that there can be at most two conjugacy classes of such elements—and these classes must be of equal size, since the centralizer of any generator of $ H $ is always $ H $ itself. 
Since  
\[
|G_1| = ep = |\Phi(G_1)| \cdot |\ker \Phi| = |\Phi(G_1)| \cdot p,
\]  
it follows that $ |\Phi(G_1)| = e $. Therefore,  
$
\frac{p - 1}{e} \in \{1, 2\}
$  
is the number of conjugacy classes of elements of order $ p $ in $ G $.
As a subgroup of a cyclic group, $ \Phi(G_1) $ is also cyclic, so generated by a element of order $e$, and $G$ contains a  element of order divisible by $e$. 

If $ e > 1 $, then any element whose order is divisible by $ e $ must be nontrivial. Moreover, such an element cannot have order $ p $, since $ e $ and $ p $ are coprime.
Putting everything together, the number of conjugacy classes satisfies:
\[
r_G \geq \left(e + \frac{p - 1}{e}\right) + e + \frac{p-1}{e}p \geq 3p - 1,
\]
\begin{itemize}
\item the first term bounds the number of conjugacy classes in $C_G(1) = G$ by Brauer's result,
\item the second corresponds to the non-trivial element of order divisible by $e$,
\item the third accounts for $ \frac{p-1}{e} $ conjugacy classes of elements of order $ p $.
\end{itemize}    
Observe that the second inequality follows immediately upon setting $\frac{p - 1}{e} = 1$ or $2$.

Finally, if $ e = 1 $, then $ p = 3 $. Given that $ p $ is the largest prime divisor of $ |G| $, that $ 9 $ does not divide $ |G| $, and that $ G $ is non-Abelian, it follows that $ G $ contains an element of order 2. This element can thus be used in our bound in place of one whose order is divisible by $ e $.
\end{enumerate}

\end{appendices}

\section*{About the Supplementary Material}

The complete fusion and modular data are provided separately in the Supplementary Material accompanying this paper, which is cited as~\cite{ABPPsupp}.

\section*{Acknowledgments}
We sincerely thank the anonymous referees for their careful review, which greatly improved this paper. We extend our gratitude to Sebastian Burciu, César Galindo, Eric Rowell, Yilong Wang and Andrew Schopieray for their invigorating interest in this work and for the insightful discussions that have greatly contributed to it. Thanks also to Dave Benson and Geoff Robinson for exchanges regarding finite group theory. The third author's research was supported by the BIMSA Start-up Research Fund, and the Foreign Youth Talent Program sponsored by the Ministry of Science and Technology of China (Grant No. QN2021001001L), and the National Natural Science Foundation of China (NSFC, Grant no. 12471031). The high-performance computing cluster at the University of Osnabrück, which was instrumental in facilitating our computations, was funded by the DFG grant 456666331. We also thank Lars Knipschild, the system administrator of the HPC facilities, for his support and technical assistance.

\vspace*{.5cm}

\noindent \textbf{Availability of data and materials.} Data for the computations in this paper are available on reasonable request from the authors. The softwares used for the computations can be downloaded from the URLs listed in the references.

\vspace*{.15cm}

\noindent \textbf{Conflict of interest statement.} On behalf of all authors, the corresponding author declares that there are no conflicts of interest.

\end{document}